\newtheorem{Thm}{Theorem}[section]
\newtheorem{Conj}[Thm]{Conjecture}
\newtheorem{Prop}[Thm]{Proposition}
\newtheorem{Def/Thm}[Thm]{Definition/Theorem}
\newtheorem{Cor}[Thm]{Corollary}
\theoremstyle{remark}
\newtheorem{Rmk}[Thm]{Remark}
\numberwithin{equation}{section}
\newcommand{\ti }{\times}
\newcommand{\ot }{\otimes}
\newcommand{\ra }{\rightarrow}
\newcommand{\Hom }{{\mathrm{Hom}}}
\newcommand{\Pic}{{\mathrm{Pic}}}
\newcommand{\G}{{\bf G}}
\newcommand{\PP }{{\mathbb P}}
\newcommand{\QQ }{{\mathbb Q}}
\newcommand{\CC }{{\mathbb C}}
\newcommand{\ZZ }{{\mathbb Z}}
\newcommand{\ke }{{\varepsilon }}
\newcommand{\kl }{{\lambda}}
\newcommand{\vir}{\mathrm{vir}}
\newcommand{\WmodG}{W/\!\!/\G}
\newcommand{\T}{{\bf T}}
\newcommand{\re}{\mathrm{e}}
\newcommand{\lan}{\langle}
\newcommand{\ran}{\rangle}
\newcommand{\lla}{\langle\!\langle}
\newcommand{\rra}{\rangle\!\rangle}
\newcommand{\WtG}{W/\!\!/_{\!\theta}\G}
\newcommand{\VtG}{V/\!\!/_{\!\theta}\G}
\newcommand{\bx}{\mathbf{x}}
\newcommand{\ret}{\re ^{\T}}
\newcommand{\rV}{\mathrm{V}}
\newcommand{\uuI}{{\underline{I}}}
\begin{document}

\title[Mirror Theorem for Elliptic Quasimap Invariants]{Mirror Theorem for Elliptic Quasimap Invariants}

\begin{abstract} 
We propose and prove a mirror theorem for the elliptic quasimap invariants 
of smooth Calabi-Yau complete intersections in projective spaces. 
The theorem combined with the wall-crossing 
formula appeared in \cite{CKg} implies mirror theorems of Zinger and  Popa for the elliptic 
Gromov-Witten invariants of those varieties.  This paper and the wall-crossing formula 
provide a unified framework for the mirror theory of rational and elliptic Gromov-Witten invariants.
\end{abstract}

\author{Bumsig Kim}
\address{School of Mathematics, Korea Institute for Advanced Study,
85 Hoegiro, Dongdaemun-gu, Seoul, 02455, Korea}
\email{bumsig@kias.re.kr}

\author{Hyenho Lho}
\address{School of Mathematics, Korea Institute for Advanced Study,
85 Hoegiro, Dongdaemun-gu, Seoul, 02455, Korea}
\email{hyenho@kias.re.kr}


\maketitle


\section{Introduction}

Let $W$ be a codimension $r$ affine subvariety in $\CC ^{n}$ defined by homogeneous degree $l_1, ..., l_r$ 
polynomials such that the origin is the only singular point of $W$. 
Assume $$\sum_{a=1}^r l_a = n$$ and let $\G := \CC^*$ act on $\CC ^n$ by the standard diagonal action  so that 
its associated GIT quotient $$X :=\WmodG$$ is a codimension $r$, nonsingular Calabi-Yau complete intersection in $\PP ^{n-1}$.

With this setup, for each positive rational number $\ke$ there are so-called $\ke$-stable quasimap moduli space
$$Q^{\ke}_{g, 0}(X, d)$$ with the canonical virtual fundamental class $[Q^{\ke}_{g, 0}(X, d)]^{\vir}$ (see \cite{CKM}).
We are mainly interested in the space $Q^{\ke}_{g, 0}(X, d)$ with small enough $\ke$ with respect to 
degree $d$, which will be denoted by $Q^{0+}_{g, 0}(X, d)$ and also simply by $Q_{g, 0}(X, d)$.
When $\ke >2$, $Q^\ke _{g, 0}(X, d)$ coincides with the moduli space 
$\overline{M}_{g, 0}(X, d)$ of stable maps, which will be denoted also by
$Q^{\infty}_{g, 0}(X, d)$.

When $g=1$, the virtual dimension of $Q^{\ke}_{g, 0}(X, d)$ is always zero.
The main goal of this paper is to discover an explicit description of 
$\deg [Q_{1,0}(X, d)]^{\vir}$ in terms of the Givental's $I$-function for $X$. 
Let \begin{align*}  \lan\; \ran ^{\ke }_{1,0} := \sum_{d=1}^{\infty} q^d \deg [Q^{\ke}_{1,0}(X, d)]^{\vir} , \end{align*}
where $q$ is a formal Novikov variable.
We express the generating function $\lan\; \ran ^{\ke }_{1,0} $
 in terms of Givental's $\T$-equivariant $I$-function  for $X$,
where $\T:=(\CC^*)^n$ is the complex torus group acting on $\PP ^{n-1}$ (see \cite{Gequiv}).

The equivariant $I$-function is the $H^*_{\T}(\PP ^{n-1})\ot \QQ(\lambda)$-valued formal function in formal variables $q, z, t_H$:
\begin{align}\label{equi_I} I_{\T}(t,  q):= e^{t_HH/z}\sum _{d=0}^{\infty} q^d e^{t_Hd}
 \frac{\prod _{a=1}^r  \prod _{k=1}^{l_ad} (l_aH + kz)}{\prod _{k=1}^{d} \prod _{j=1}^n (H-\lambda _j +kz)} ,
\end{align}
where $\lambda _1, ..., \lambda _n$ are the $\T$-equivariant parameters; $\QQ (\lambda)$ denotes
the quotient field of the polynomial ring in $\lambda _1, ...,\lambda _n$; $H$ is 
the $\T$-equivariant hyperplane class; and $t :=t_H H$.

Let $\lambda _0$ be another formal parameter.
Consider the restriction $I_{\T}(0, q)|_{p_i}  $ of $I_{\T}(0, q)$ to the $i$-th $\T$-fixed point 
\begin{align}\label{p_i} p_i := [\underbrace{0, ..., 1}_{i}, ..., 0] \in \PP ^{n-1} . \end{align}
 Define the $q$-series 
$\mu (q), R_0(q) \in \QQ[[q]]$ by the asymptotic expansion
\begin{align*}   I(0, q)|_{p_i}  \equiv e^{\mu (q)\lambda_i/z }  (R_0(q) + O(z )) , \end{align*}
where $\equiv$ means the equality after the specialization
 \begin{align}\label{roots} \lambda _i = \lambda _0 \exp {(2\pi i\sqrt{-1}/n)}, \ \ i=1,..., n.\end{align}
For the existence of the asymptotic expansion, see \eqref{asymJ}.
 Denote by  $$\underline{I}_{\T}$$ the specialization of $I_{\T}$
with \eqref{roots}.

For $k=0, 1, ..., n-1$, define the initial constants $C_k (q)\in \QQ[[q]]$ of form $1+O(q)$  inductively by the requirements
\[ C_k(q)H^k  = B_k +  O(1/z)  \] 
 in the following Birkhoff factorization procedure:
\begin{multline*} B_0 :=\underline{I}_{\T} (0, q),\   B_1:= (H+zq\frac{d}{dq}) \frac{B_0}{C_{0}(q)}, \cdots , \\
\ B_k   := (H + zq\frac{d}{dq}) \frac{B_{k-1}}{C_{k-1}(q)}, 
  \cdots, \ B_{n-1}:= (H + zq\frac{d}{dq})  \frac{B_{n-2}}{C_{n-2}(q)}. \end{multline*}
There is an interpretation of $C_k$ as a $\T$-equivariant quasimap invariant (see Remark \ref{C_b}).

\medskip

Now we are ready to state one of two main results of this paper.

\begin{Thm}\label{Main}
\begin{multline*}  \lan\; \ran ^{0+}_{1,0}   =  -\frac{3(n-1-r)^2+n-r-3}{48} \log (1 - q\prod _{a=1}^r  l_a ^{l_a}) \\
  - \frac{1}{2}\sum _{k=0}^{n-2-r} \binom{n-r-k}{2}\log C_{k} (q) .  \end{multline*}
\end{Thm}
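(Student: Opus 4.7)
I would prove Theorem~\ref{Main} by $\T$-equivariant virtual localization on the ambient quasimap moduli $Q^{0+}_{1,0}(\PP^{n-1},d)$, combined with the quantum Lefschetz principle. Writing $E := \bigoplus_{a=1}^r \cO_{\PP^{n-1}}(l_a)$, the Calabi--Yau condition $\sum l_a = n$ yields the equivariant identity
\[ [Q^{0+}_{1,0}(X,d)]^{\vir} = e_{\T}\bigl(R\pi_* \fev^* E\bigr) \cap [Q^{0+}_{1,0}(\PP^{n-1},d)]^{\vir}, \]
whose non-equivariant limit exists and integrates to $\deg[Q^{0+}_{1,0}(X,d)]^{\vir}$. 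The reduction to the ambient problem is important because $\T$-fixed loci in $Q^{0+}_{1,0}(\PP^{n-1},d)$ admit a clean graph-theoretic description, whereas those in $Q^{0+}_{1,0}(X,d)$ generally do not.

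\textbf{Graph sum.} A $\T$-fixed quasimap to $\PP^{n-1}$ decomposes as a genus 1 contracted ``core'' subcurve mapping to some fixed point $p_i$; marked legs attached to the core; and at each leg either a rational chain of $\T$-invariant $\PP^1$'s joining fixed points, or (in the $0+$ regime) a genus 0 quasimap tail with all base points clustered at the node. Summing over dual graphs, the equivariant integral factors into (i) a Hodge-type descendent integral on $\oM_{1, m}$ at the elliptic vertex, (ii) edge contributions built from $\T$-characters of $\cO(1)$ along fixed lines, and (iii) leg generating series whose structure matches the $z$-asymptotic expansion of $\underline{I}_{\T}(0,q)|_{p_i}$.

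\textbf{Birkhoff identifies the leg data.} The recursion $B_k = (H+zq\frac{d}{dq})\,B_{k-1}/C_{k-1}(q)$ defining $C_k$ is tailored to strip off successive coefficients of $\underline{I}_{\T}$ in powers of $1/z$. Reorganizing the leg sums via this recursion identifies each leg series, after the cyclic specialization \eqref{roots}, as an explicit polynomial in $R_0(q)$ and the $C_k(q)$'s. The elliptic vertex Hodge integrals reduce, by string and dilaton equations together with the base formula $\int_{\oM_{1,1}}\lambda_1 = 1/24$ and the standard evaluations of $\lambda$-class integrals, to rational constants depending only on the valence of the vertex and the $\T$-characters of $E$ at $p_i$.

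\textbf{Assembly and main obstacle.} Combining vertex and leg contributions produces an identity expressing $q\frac{d}{dq}\lan\;\ran^{0+}_{1,0}$ as a $\QQ$-linear combination of $q\frac{d}{dq}\log C_k(q)$ and $q\frac{d}{dq}\log R_0(q)$. Integrating in $q$ and using an explicit algebraic relation between $R_0(q)$ and $1 - q\prod_a l_a^{l_a}$ (a consequence of the Picard--Fuchs equation satisfied by $\underline{I}_{\T}$) yields the stated logarithmic formula. The \emph{main obstacle} is the combinatorial-cohomological verification that, after the cyclic specialization, all equivariant cross-terms cancel and the Hodge contributions at the elliptic vertex assemble exactly into the constants $\frac{1}{2}\binom{n-r-k}{2}$ and $\frac{3(n-1-r)^2+n-r-3}{48}$: this Calabi--Yau ``miracle'', driven by the condition $\sum l_a = n$, is the real content of the proof and the place where the specific form of the leading coefficient must be matched by direct computation.
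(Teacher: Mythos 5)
Your overall strategy -- $\T$-equivariant localization on the ambient quasimap space $Q^{0+}_{1,0}(\PP^{n-1},d)$ twisted by the Euler class of $R\pi_*f^*\tilde E$, Birkhoff factorization to organize the genus-zero data, and the Picard--Fuchs equation to close the formulas -- is indeed the paper's strategy. But the proposal has two genuine gaps at exactly the points where the quasimap setting differs from Givental's stable-maps computation. First, the vertex contribution: in the $\ke=0+$ regime the $\T$-fixed locus over a fixed point $p_\sigma$ is not a Hodge integral over $\oM_{1,m}$ but a quotient of $\oM_{1,0|\beta_\sigma}$ with \emph{light} ($0+$ weighted) points recording base-point positions, and the integrand $F^{(1,0)}_{\sigma,\beta}$ is a polynomial in diagonal classes $\Delta_J$ of these light points. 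Your appeal to string/dilaton equations and ``standard $\lambda$-class evaluations'' does not engage with this; the step you are missing is the product formula $A^{\beta}_{J_1,\dots,J_k}=\prod_i A^{\beta_{J_i}}$ together with the integrals $\int_{\oM_{1,0|\beta_\sigma}}\Delta_J\hat\psi_{J_1}^{a_1}\cdots$, which assemble the vertex sum into an \emph{exponential formula}
\[
\sum_{\beta\ne 0}\frac{q^{\beta}}{\prod_\rho\beta(\rho)!}\int_{\oM_{1,0|\beta_\sigma}}F^{(1,0)}_{\sigma,\beta}=\frac{1}{24}\log D_\sigma|_{t=0}=-\frac{1}{24}\log \mathds{R}_{\sigma,0}|_{\tilde t=0},
\]
and this logarithm is what ultimately produces the $-\tfrac{n}{24}\log R_0(q)$ piece. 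Without it the vertex sum is not identified with anything closed.

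Second, you have misattributed the origin of the $C_k$'s. They do not arise from ``leg generating series'' at the elliptic vertex; they come from the \emph{loop-type} fixed loci (reducible domains), whose total contribution is $\frac{1}{2}\sum_i(\partial_{\tilde\gamma}\mathds{U}_i)\lim_{(x,y)\to(0,0)}\bigl(e^{-\mathds{U}_i(1/x+1/y)}e_i\mathds{V}_{ii}(x,y)-\tfrac{1}{x+y}\bigr)$. One then writes $\mathds{V}_{ii}$ quadratically in the $S$-operator, identifies $\mathds{S}_{\tilde t=0}(H^k)\equiv\mathfrak{B}^k(\underline{I}_\T|_{t=0}/I_0)$ via the uniqueness statement of Proposition~\ref{BP}, and evaluates the double limit (Popa's residue computation) -- that is where the coefficients $\binom{n-r-k}{2}$ and the leading constant come from. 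Relatedly, since $Q^{0+}_{1,0}$ has no markings, the paper must first break the symmetry by inserting one infinitesimally weighted marking carrying a divisor class (so that functoriality \eqref{functorial} survives) and then remove it by the divisor equation; your plan localizes the unmarked space directly and never confronts this. These are not bookkeeping issues: the ``Calabi--Yau miracle'' you flag as the main obstacle is in fact a routine consequence of the structural identity \eqref{Gexp} once the vertex and loop terms are correctly identified, and it is those identifications that your proposal leaves unproved.
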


\medskip

There is a following wall-crossing formula conjectured in \cite{CKg} and proven in \cite{CKw}.
Define  $I_0:=C_0$ and $ I_1$ by the $1/z$-expansion of
\[ I_{\T}|_{\lambda = t = 0} = I_0 + I_1/z + O(1/ z^2). \]

\begin{Thm}\cite{CKw} \begin{multline}\label{Wall} 
  \lan\; \ran ^{\infty}_{1,0}  |_{q^d \mapsto q^d \exp (\int _{d[\mathrm{line}]} \frac{I_1}{I_0} ) }  - \lan\; \ran ^{0+}_{1,0}    
   \\ = \frac{1}{24}\chi _{\mathrm{top}} (X) \log I_0  + \frac{1}{24} \int _X \frac{I_1}{I_0} c_{\dim X -1} (TX). \end{multline}
Here $\chi _{\mathrm{top}} (X)$ is the topological Euler characteristic of $X$ and $c_{\dim X -1} (TX)$
is the $(\dim X -1)$-th Chern class of the tangent bundle $TX$.
\end{Thm}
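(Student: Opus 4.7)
The plan is to prove the identity by virtual localization on graph spaces combined with an inductive wall-crossing argument across the stability thresholds $\ke = 1/d$. The genus zero wall-crossing established in \cite{CKw} already identifies the required change of variables as the mirror map $q^d \mapsto q^d \exp(\int_{d[\mathrm{line}]} I_1/I_0)$; the essential task is to extract the new genus one correction.

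First, I would introduce the graph space $QG^\ke_{1,0,\beta}([W/\G])$ parametrizing $\ke$-stable quasimaps whose domain carries a distinguished parametrized $\PP^1$-component, equipped with the $\CC^*$-action rotating $\PP^1$. Virtual localization yields an equivariant identity whose nonequivariant limit expresses combinations of the invariants $\lan\;\ran^{\ke}_{1,0}$ in terms of contributions from $\CC^*$-fixed loci supported over $0,\infty \in \PP^1$. These contributions are organized by the $z$-expansion of the equivariant $I$-function, with $I_0$ and $I_1$ controlling the two leading terms. Varying $\ke$ across the walls $\ke = 1/d$, the change of virtual cycle is traced to rational tails of degree $d$ carrying a base point, attached to the genus one component at a single node. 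Each such configuration contributes a product of three factors: a residual genus one quasimap invariant from the complementary piece, an explicit $I$-function coefficient at the base-point end, and a gluing factor along the node involving the Hodge line bundle pulled back from $\overline{M}_{1,1}$.

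Summing these contributions over all $d$ implements the mirror map substitution on $\lan\;\ran^{\infty}_{1,0}$ up to a genuinely genus one correction, which is then evaluated using Mumford's formula for the Chern character of the Hodge bundle and $\int_{\overline{M}_{1,1}} \psi_1 = 1/24$. Applied to the universal family over $X \times \overline{M}_{1,1}$, this produces two pieces: a \emph{constant-map} contribution equal to $\tfrac{1}{24}\chi_{\mathrm{top}}(X)\log I_0$, arising from the Euler class of the obstruction bundle times $\int_{\overline{M}_{1,1}} \lambda_1$, and a \emph{linearized} contribution $\tfrac{1}{24}\int_X \tfrac{I_1}{I_0}\, c_{\dim X - 1}(TX)$, coming from pairing the first derivative of the mirror map against the top Chern class of $TX$ restricted to the diagonal.

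The main obstacle is controlling the recursive structure of the wall-crossing across infinitely many walls $\ke = 1/d$ and showing that the corrections assemble into the clean logarithmic expression on the right-hand side rather than into an unwieldy formal series. A subsidiary difficulty is establishing that only $I_0$ and $I_1$ survive: the higher $1/z$-coefficients of $I_{\T}$ must cancel via genus one string/dilaton-type identities and the Calabi-Yau condition $\sum l_a = n$, which forces the $I$-function to be \emph{of hypergeometric type with no higher asymptotic correction} beyond the mirror map. Finally, careful bookkeeping of automorphism factors, node normalization factors, and compatibility with the $\T$-equivariant localization used to prove Theorem \ref{Main} is needed to pin down the exact coefficients $1/24$.
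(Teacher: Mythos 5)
The first thing to note is that this paper does not prove the statement at all: the theorem is imported verbatim from \cite{CKw} (``In preparation''), and the surrounding text explicitly says it was conjectured in \cite{CKg} and proven in \cite{CKw}. So there is no in-paper argument to compare yours against; your proposal can only be judged on its own terms as a reconstruction of the external proof.

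On those terms, what you have written is a plausible table of contents rather than a proof, and it contains one step that rests on a false premise. The overall architecture --- varying $\ke$ across the walls $\ke=1/d$, attributing the change of virtual class to rational tails carrying base points, and evaluating the genuinely genus one correction via Hodge-class integrals on $\overline{M}_{1,1}$ --- is indeed the right circle of ideas for this kind of wall-crossing, and the appearance of $1/24$ from $\int_{\overline{M}_{1,1}}\lambda_1$ is the right mechanism for the $\chi_{\mathrm{top}}(X)$ term. But the two points you yourself flag as ``obstacles'' (assembling infinitely many wall contributions into the closed logarithmic form, and showing that only $I_0$ and $I_1$ survive) are precisely the content of the theorem, so deferring them leaves the proof empty where it matters. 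Worse, your proposed resolution of the second point is wrong: for a Calabi--Yau complete intersection of dimension $\ge 3$ the coefficients $I_2, I_3, \dots$ in the $1/z$-expansion of $I_{\T}|_{\lambda=t=0}$ do \emph{not} vanish (for the quintic they encode nontrivial genus zero invariants), so the Calabi--Yau condition $\sum_a l_a = n$ does not force the $I$-function to truncate after the mirror map. The reason only $I_0$ and $I_1$ enter the genus one wall-crossing is structural (semi-positivity controls the $z$-degree of the relevant localization residues), not a vanishing of higher coefficients, and your argument as stated would not establish it. A minor point: the genus zero wall-crossing you invoke is from \cite{CKg0}, not \cite{CKw}.
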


\medskip

Without any usage of the reduced Gromov-Witten invariants, Theorem \ref{Main} 
combined with the wall-crossing formula \eqref{Wall} reproves
the following mirror theorem of Zinger and Popa for  Calabi-Yau complete intersections in projective spaces. 

\begin{Thm}\label{Zinger}\cite{Zg1, Popa1}
\begin{multline*}  \lan\; \ran ^{\infty}_{1,0}  |_{q^d\mapsto  q^d \exp (\int _{d[\mathrm{line}]} I_1/I_0 )}  = 
\frac{1}{24} \chi _{\mathrm{top}}(X) \log I_0
+ \frac{1}{24} \int _X \frac{I_1}{I_0} c_{\dim X -1} (TX) 
 \\  - \frac{3(n-1-r)^2+n-r-3}{48} \log (1 - q\prod _{a=1}^r  l_a ^{l_a})  \\  - 
\frac{1}{2}\sum _{k=0}^{n-2-r} \binom{n-r-k}{2}\log C_{k} (q) . \end{multline*}
\end{Thm}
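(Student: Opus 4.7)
The statement is essentially the formal sum of Theorem \ref{Main} and the wall-crossing formula of Theorem 1.2, so my proof plan is short and mostly bookkeeping. The plan is to rearrange the wall-crossing identity \eqref{Wall} to solve for the change-of-variable specialization of $\lan\; \ran ^{\infty}_{1,0}$, namely
\begin{equation*}
\lan\; \ran ^{\infty}_{1,0}\bigl|_{q^d \mapsto q^d \exp (\int _{d[\mathrm{line}]} I_1/I_0)} \;=\; \lan\; \ran ^{0+}_{1,0} \;+\; \tfrac{1}{24}\chi_{\mathrm{top}}(X)\log I_0 \;+\; \tfrac{1}{24}\int_X \tfrac{I_1}{I_0} c_{\dim X - 1}(TX),
\end{equation*}
and then to substitute the explicit formula for $\lan\; \ran ^{0+}_{1,0}$ provided by Theorem \ref{Main}. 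After substitution, the two terms on the right produced by Theorem \ref{Main}, namely the $\log(1 - q\prod_a l_a^{l_a})$ term and the sum $\sum_k \binom{n-r-k}{2}\log C_k(q)$, combine with the two Euler-characteristic and Chern-class terms from \eqref{Wall} to yield exactly the right-hand side of Theorem \ref{Zinger}.

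There are essentially no new ingredients to introduce, only a consistency check that the formal Novikov variable used in Theorem \ref{Main} matches the one in the wall-crossing formula before the mirror change of variables is applied; I would verify this at the start so the substitution is unambiguous. The invariants $\lan\; \ran ^{0+}_{1,0}$ and $\lan\; \ran ^{\infty}_{1,0}$ are both defined in the same Novikov variable $q$ in the introduction, and the mirror map $q^d \mapsto q^d\exp(\int_{d[\mathrm{line}]} I_1/I_0)$ is applied only after Theorem \ref{Main} is invoked, so no further identification is needed.

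The only conceivable obstacle is a sign or normalization discrepancy between the conventions of Theorem \ref{Main} and the wall-crossing formula \eqref{Wall}; for instance, one should make sure that the $I_0$ appearing in \eqref{Wall} really is the $C_0(q)$ used in the Birkhoff procedure preceding Theorem \ref{Main} (the excerpt defines them so, $I_0 := C_0$), and that the formal power series $\log I_0$ and $\log C_k(q)$ are taken with the normalization $\log(1 + O(q))$ so that both sides are well-defined elements of $q\QQ[[q]]$. Once these identifications are in place, the theorem follows by a one-line addition.

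Thus the hard work is done elsewhere: the essential content lies in Theorem \ref{Main}, whose proof is the subject of the rest of the paper, and in the wall-crossing formula of \cite{CKw}. The role of Theorem \ref{Zinger} here is to record that, assembled together, these inputs give a new proof of the Zinger--Popa mirror theorem without any recourse to reduced Gromov--Witten theory.
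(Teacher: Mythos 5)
Your proposal is correct and matches the paper's own derivation: the paper obtains Theorem \ref{Zinger} precisely by adding Theorem \ref{Main} to the wall-crossing identity \eqref{Wall}, with no additional input. The consistency checks you flag (same Novikov variable, $I_0 = C_0$, normalization of the logarithms) are exactly the right ones and are satisfied by the paper's conventions.
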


\bigskip

The above three theorems are logically independently proven and 
 any pair of them implies the remaining theorem.
 Theorem \ref{Main} combined with Theorem \ref{Wall} answers the question raised by Marian, Oprea, and Pandharipande in
\S 10.2 of \cite{MOP}.

 \medskip

The strategy to prove Theorem  \ref{Main} consists of two steps.
The first step is quite general and conceptual. We obtain Theorem \ref{G1}, one of two main results of this paper.
The theorem is a quasimap version of Givental's expression \cite{Elliptic}
of the elliptic Gromov-Witten generating function for a smooth projective toric variety twisted by a vector bundle. 
The latter expression  is given in terms of the equivariant Frobenius structure of the equivariant
quantum cohomology.
One may regard Theorem \ref{G1} as  a mirror theorem
for elliptic quasimap invariants for Calabi-Yau complete intersections in toric varieties (as well as in partial flag varieties, see 
Remark \ref{flag}), in the following sense.

Whenever one computes the RHS in Conjecture \ref{GConj} as a closed form, one obtains a closed form of the mirror theorem.
 Inspired by \cite{Zg1, Popa1}, we accomplish the computation for Calabi-Yau complete intersections in projective spaces.
This second step is completely algebraic. We will, however, see that  the geometric natures of various generating functions of quasimap invariants
make the step crucially simple.

\subsection{Acknowledgments} 
We thank Ionu\c t Ciocan-Fontanine and Jeongseok Oh for useful discussions. 
The research of B.K. and H.L. 
was partially supported by the NRF grant 2007-0093859. 


\section{Localized Elliptic Expression}\label{local_expression_section}

Let $\G$ be a complex reductive group and let $V$ be a finite dimensional
representation space of $\G$. Let $\theta$ be a character of $\G$ such that the semistable locus $V^{ss}(\theta)$ with
respect to $\theta$ has no non-trivial isotropy subgroup of $\G$.
Following the twisted theory as in \cite[\S 7]{CKg0}, we assume that a complex torus $\T$ acts on
a vector space $V$ and this action commutes with the $\G$ action on $V$. 
Assume furthermore, the induced action on $Y:= \VtG$ allows
only finitely many 0-dimensional and 1-dimensional $\T$-orbits. 

Let $E$ be a  $\G\times \T$-representation space.
Let $s$ be a $\G$-equivariant map from $V$ to $E$ 
whose zero locus  $W$ has only locally complete intersection singularities.
Assume that the semistable locus $W^{ss}(\theta)$ is nonsingular.

Denote $X= \WtG$. For $\beta \in \mathrm{Hom}_{\ZZ} (\mathrm{Pic}^\G V, \ZZ)$ 
let \[ Q_{g, k} (X, \beta) \text{ (resp. } Q_{g, k} (Y, \beta)) \]  be the moduli space of $k$-pointed genus $g$
stable quasimaps to $X$ (resp. $Y$) of class $\beta$. Denote by $f$ the universal map
from the universal curve $\mathcal{C}$ to the stack quotient $[V/\G]$:
\[ \xymatrix{   \mathcal{C} \ar[r]^{f} \ar[d]_{\pi} & [V/\G] \\
   Q_{g, k}(Y, \beta )  &    }  .\]
                     
                     Denote \[ \tilde{E} := [(E\times V) /\G] \]
                     which is a vector bundle on
$[V/\G]$. Note that  $s$ induces a section of a coherent sheaf $\pi _* f^* \tilde{E}$.
Assume that   for $g=0$ 
and also for $g=1, k=0, \beta \ne 0$, 
\[ R^1\pi _* f^* \tilde{E} = 0 . \]
For example this is the case when $E$ is a  sum $\oplus _a E_a$ of 1-dimensional $\G\times \T$-representations $E_a$
with $\G$ weight $m_a\theta$ for some positive integers $m_a$.

Let $\iota$ denote the closed immersion of $Q_{g, k} (X, \beta )$ into $Q_{g, k} (Y, \beta )$. 
By the functoriality in \cite{KKP}
we have 
\begin{equation}\label{functorial} \iota _* [Q_{g, k} (X, \beta )]^{\vir} = \re (\pi _* f^* \tilde{E}) \cap [Q_{g, k} (Y, \beta )]^{\vir} \end{equation}
for $g=0$, $k=2, 3,  ...$ and also for $g=1$, $k=0$, $\beta \ne 0$. In this paper
we study $[Q_{1, 0} (X, \beta )]^{\vir} $ using the (obvious $\T$-equivariant version of) RHS of \eqref{functorial}.

\subsection{Genus zero theory}

We introduce the definitions of various generating functions of
rational quasimap invariants with the ordinary markings.
We prove the relation \eqref{Dr} which will be needed later.

First we set the notation for the cohomology basis and its dual basis.
Let $\{ p_i \}_i$ be the set of $\T$-fixed points of $Y$ and
let $\phi _i$ be the \lq\lq delta" basis of $H^*_\T(Y)\ot \QQ (\lambda)$, that is,
\[ \phi_i |_{p_j} = \left\{ \begin{array}{rl} 1 & \text{if } i=j  \\
                                    0 & \text{if } i\ne j  \, . \end{array}\right. \] 
Let $\phi ^i$ be the dual basis with respect to the {\em $E$-twisted} $\T$-equivariant Poincar\'e pairing, i.e.,
\[ \int _Y \phi _i \phi ^j \ret (\tilde{E}|_Y) = \left\{ \begin{array}{rl} 1 & \text{if } i=j  \\
                                    0 & \text{if } i\ne j   \, ,\end{array}\right.  \]
where $\ret (\tilde{E}|_{Y})$ is the $\T$-equivariant Euler class of $\tilde{E}|_Y$. 

We assume that, for every $i$, $\ret (\tilde{E}|_{p_i})$ is {\em invertible} in $\QQ (\lambda)$ so that
the twisted Poincar\'e pairing is a perfect pairing on $H^*_\T(Y)\ot \QQ (\lambda)$.
Note that \[ \phi ^i = e_i\phi _i , \text{ where } 
e_i := \frac{1}{\int _Y \phi _i \phi _i \re ^\T (\tilde{E}|_{Y}) } = \frac{\ret(T_{p_i}Y)} {\ret (\tilde{E}|_{p_i})}.\]

Integrating along the twisted virtual fundamental class $$\ret (\pi_*f^* \tilde{E})\cap [Q_{0, k}(Y, \beta)]^{\vir}$$ we define correlators
$\lan ... \ran _{0, k, \beta}^{0+}$ as follows. For $\gamma _i \in H^*_\T(Y)\ot \QQ (\lambda )$,
\[ \lan \gamma _1\psi  ^{a_1} , ..., \gamma _k\psi  ^{a_k} \ran _{0, k, \beta}^{0+} := 
\int _{\ret (\pi_*f^* \tilde{E})\cap [Q_{0, k}(Y,\beta)]^{\vir}} \prod _i ev_i^*(\gamma _i)\psi _i ^{a_i} ,\]
where $\psi _i$ is the psi-class associated to the $i$-th marking and $ev_i$ is the 
$i$-th evaluation map.

Let $$Q_{0, k} (Y, \beta) ^{\T, p_i}$$  be the $\T$-fixed part of $Q_{0, k} (Y, \beta)$ 
whose elements have domain components only over $p_i$. 
Integrating along  the localized cycle class 
\[ \frac{\ret(\pi _*f^* \tilde{E}) \cap [Q_{0, k} (Y, \beta) ^{\T, p_i}]^{\vir}}{\ret (N^{\vir}_{Q_{0, k} (Y, \beta) ^{\T, p_i}/
Q_{0, k}(Y, \beta)} )} \]
we define $\lan ... \ran _{0, k, \beta}^{0+, p_i}$ and  $\lla ... \rra _{0, k}^{0+, p_i}$ as follows:
\begin{align*}  & \lan \gamma _1\psi  ^{a_1} , ..., \gamma _k\psi  ^{a_k} \ran _{0, k, \beta}^{0+, p_i}  :=
\int _{\frac{\ret(\pi _*f^* \tilde{E})\cap [Q_{0, k} (Y, \beta) ^{\T, p_i}]^{\vir}}{\ret (N^{\vir}_{Q_{0, k} (Y, \beta) ^{\T, p_i}/
Q_{0, k}(Y, \beta)} )}} \prod _i ev_i^*(\gamma _i)\psi _i ^{a_i} \ \ ; \\
&  \lla \gamma _1\psi  ^{a_1} , ..., \gamma _k\psi  ^{a_k} \rra _{0, k}^{0+, p_i} \\
& := \sum _{m, \beta} \frac{q^{\beta}}{m!}
 \lan    \gamma _1\psi  ^{a_1} , ..., \gamma _k\psi  ^{a_k} , t, ..., t  \ran_{0, k+m, \beta}^{0+, p_i} , \text{ for } t \in H_{\T}^* (Y)\ot \QQ (\lambda ) \ ,
      \end{align*} 
      where $q$ is a formal Novikov variable.

In what follows, let $z$ be a formal variable. 
We will need the following  $\T$-local generating functions:
\begin{align*} D_i & : = e_i  \lla 1, 1, 1 \rra ^{0+, p_i}_{0, 3 } = 1 + O(q) \ \ ; \\
  u_i & :=  e_i  \lla  1, 1 \rra_{0, 2}^{0+, p_i}    =  t|_{p_i} + O(q)  \ \ ; \\
 S_t^{0+, p_i} (\gamma ) & := e_i
 \lla \frac{1}{z-\psi } , \gamma \rra_{0, 2}^{0+, p_i} = e^t\gamma |_{p_i} + O(q) \\ 
        & \text{ for } \gamma \in H^*_{\T}(Y)\ot \QQ (\lambda ) [[q]]  \ \ ; \\
 J^{0+, p_i} &:= e_i  \lla \frac{1 }{z(z-\psi ) }\rra _{0,1}^{0+, p_i} =  e^t |_{p_i} + O(q) \ ,
\end{align*} 
where the unstable terms of $J^{0+, p_i}$ are defined by the quasimap graph spaces $QG^{0+}_{0, 0, \beta} (Y)$ as in \cite{CK, CKg0}
so that 
\[ J^{0+, p_i}|_{t=0}  = J^{0+} |_{t=0,\, p_i} \]
(see \S 5 of \cite{CKg0} for the definition of $J^{0+}$). 
Here the front terms $e_i$ are inserted as the  class $E$-Poincar\'e dual to $\phi_i |_{p_i} = 1$.
The parameter $z$ naturally appears as the $\CC ^*$-equivariant parameter in the graph 
construction (see \S 4 of \cite{CKg0}). It is originated from the $\CC ^*$-action on $\PP ^1$.

Denote by $QG^{0+}_{0, k, \beta } (Y) $ the quasimap graph spaces (see \cite{CKg0})
and by $$QG^{0+}_{0, k, \beta } (Y)^{\T, p_i} $$ the $\T$-fixed part of $QG_{0, k, \beta } (Y) $ 
whose elements have domain components only over $p_i$. 
Further, we define invariants and generating functions on the graph spaces:
for $\gamma _i \in H^*_{\T}(Y)\ot H^*_{\CC ^*} (\PP ^1) \ot \QQ (\lambda )$
\begin{align*}
& \lan \gamma _1\psi  ^{a_1} , ..., \gamma _k\psi  ^{a_k} \ran _{k, \beta}^{QG^{0+}, p_i} :=
\int _{\frac{\ret(\pi _*f^* \tilde{E})\cap [QG_{0, k, \beta } (Y) ^{\T, p_i}]^{\vir}}{\ret (N^{\vir}_{QG_{0, k, \beta} (Y) ^{\T, p_i}/
QG_{0, k, \beta }(Y)} )}} \prod _i ev_i^*(\gamma _i)\psi _i ^{a_i} \ \ ; \\
&  \lla \gamma _1\psi  ^{a_1} , ..., \gamma _k\psi  ^{a_k} \rra _{k}^{QG^{0+}, p_i} \\
& := \sum _{m, \beta} \frac{q^{\beta}}{m!}
 \lan    \gamma _1\psi  ^{a_1} , ..., \gamma _k\psi  ^{a_k} , t, ..., t  \ran_{k+m, \beta}^{QG^{0+}, p_i} , \text{ for } t \in H_{\T}^* (Y)\ot \QQ (\lambda ) .
\end{align*}
Here we denote by $ev_i$ the $i$-th evaluation map to $Y\times \PP ^1$ from the quasimap graph spaces
and regard $t$ also as the elements $t\ot 1$  in $H^*_{\T}(Y)\ot H^*_{\CC ^*} (\PP ^1) \ot \QQ (\lambda )$.

In what follows, let ${\bf p}_\infty$ be the equivariant cohomology class 
$H^*_{\CC ^*} (\PP ^1)$ defined by the requirements
\[ {\bf p}_{\infty} |_{0} = 0, \ {\bf p}_{\infty} |_{\infty} = - z .\]

\begin{Prop}\label{fact}
\begin{equation}\label{Der_Dr} J^{0+, p_i} = S_t ^{0+, p_i} (P^{0+, p_i}) , \end{equation}
where 
\[ P^{0+, p_i} :=  e_i  \lla 1 \ot {\bf p}_\infty \rra _{1 }^{QG^{0+}, p_i} \]
\end{Prop}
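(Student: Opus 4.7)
\emph{Proof plan.} My strategy is to prove the identity \eqref{Der_Dr} by applying virtual $\CC^*$-localization to the graph space $QG^{0+}_{0, 1+m, \beta}(Y)^{\T, p_i}$ appearing in the definition of $P^{0+, p_i}$, with $\CC^*$ acting by rotation on the parameterized $\PP^1$. This is the standard technique behind the ``$J = S \cdot I$''-type identities in the Coates--Givental framework and their quasimap counterparts (see \S\S 4--5 of \cite{CKg0}). The key input is that ${\bf p}_\infty|_0 = 0$ and ${\bf p}_\infty|_\infty = -z$, so only those $\CC^*$-fixed strata in which the distinguished marking carrying $1 \otimes {\bf p}_\infty$ lies over $\infty \in \PP^1$ contribute, and each such contribution carries an additional factor of $-z$.

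The second step is to enumerate the surviving fixed strata. A typical stratum corresponds to a decomposition of the domain as $C_0 \cup \PP^1 \cup C_\infty$, where $\PP^1$ is the parameterized rational component and $C_0, C_\infty$ are (possibly empty) genus zero subcurves attached at $0$ and $\infty$ respectively; the total degree splits as $\beta_0 + \beta_\infty = \beta$ (with any residual degree on the parameterized $\PP^1$ absorbed into base-point data in the $0+$-sense), and the auxiliary $t$-markings distribute between $C_0$ and $C_\infty$. The virtual normal bundle at the two nodes contributes the smoothing factors $1/(z - \psi^{(0)})$ and $1/(-z - \psi^{(\infty)})$ on the $C_0$- and $C_\infty$-sides, where $\psi^{(0)}, \psi^{(\infty)}$ are the cotangent line classes at the nodal markings.

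The third step is to identify each piece after summation. Summing the $C_0$-side contribution over $\beta_0$ and over the number of $t$-markings on $C_0$, after combining with the $\CC^*$-tangent weight at $0 \in \PP^1$, reproduces the series $J^{0+, p_i}$, exactly as in the familiar identification of the $J$-function with a graph-space localization at $0$. Similarly, the $C_\infty$-side sum, combined with the $-z$ factor from ${\bf p}_\infty|_\infty$ and the tangent weight at $\infty$, reproduces $S_t^{0+, p_i}(1)$. The localization identity then reads
\[ P^{0+, p_i} \cdot S_t^{0+, p_i}(1) = J^{0+, p_i}, \]
and using $\QQ(\lambda)[[q]]$-linearity of $S_t^{0+, p_i}$ in its scalar argument (i.e., $S_t^{0+, p_i}(c \cdot 1) = c \cdot S_t^{0+, p_i}(1)$ for $c \in \QQ(\lambda)[[q]]$) this rewrites as $J^{0+, p_i} = S_t^{0+, p_i}(P^{0+, p_i})$, which is \eqref{Der_Dr}.

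The main obstacle will be the careful bookkeeping of the $\CC^*$-equivariant Euler class contributions---specifically the tangent weights on the parameterized $\PP^1$, the smoothing factors at the two nodes, and the base-point contributions on the parameterized $\PP^1$ in the $0+$-quasimap setting---so that the three summed pieces align with the definitions of $J^{0+, p_i}$, $S_t^{0+, p_i}(1)$, and $P^{0+, p_i}$ with no spurious normalization factors. Once this accounting is organized, the proposition follows directly from the localization formula.
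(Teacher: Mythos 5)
Your plan is the same as the paper's proof, which is itself only two sentences: fix $m$ and $\beta$ and apply $\CC^*$-localization to the graph-space definition of $P^{0+,p_i}$, exactly parallel to Theorem 5.4.1 of \cite{CKg0}. Your decomposition of the fixed loci, the use of ${\bf p}_\infty|_0=0$ to kill the strata with the distinguished marking over $0$, and the identification of the $0$-side sum (including the base-point strata on the parameterized $\PP^1$, which account for the unstable terms of $J^{0+,p_i}$) are all as intended.

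One bookkeeping slip worth fixing: your identification of the $\infty$-side sum with $S_t^{0+,p_i}(1)$ is inconsistent with the identity $P^{0+,p_i}\cdot S_t^{0+,p_i}(1)=J^{0+,p_i}$ that you then write down. If the $0$-side gave $J^{0+,p_i}$ and the $\infty$-side gave $S_t^{0+,p_i}(1)$, localization would yield $P^{0+,p_i}=J^{0+,p_i}\cdot S_t^{0+,p_i}(1)$, which is the wrong formula. In fact the node-smoothing factor at $\infty$ is $1/(-z-\psi)$ and the extra weight there is $-z$, so after multiplying by ${\bf p}_\infty|_\infty=-z$ the $\infty$-side sum is $e_i\lla \tfrac{1}{-z-\psi},1\rra_{0,2}^{0+,p_i}=S_t^{0+,p_i}(1)|_{z\mapsto -z}=e^{-u_i/z}$, i.e.\ the \emph{reciprocal} of $S_t^{0+,p_i}(1)$ (using \eqref{localS}). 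This gives $P^{0+,p_i}=J^{0+,p_i}\,e^{-u_i/z}$, equivalently $J^{0+,p_i}=e^{u_i/z}P^{0+,p_i}=S_t^{0+,p_i}(P^{0+,p_i})$, which is \eqref{Der_Dr}. With that sign/inversion corrected, your argument is the paper's.
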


\begin{proof} The proof is completely parallel to the poof of Theorem 5.4.1 of \cite{CKg0}.
Fix the number of markings and the degree class $\beta$ and then apply the $\CC^*$-localization to the definition of $P ^{0+, p_i}$. 
\end{proof}

By the uniqueness lemma in \S 7.7 of \cite{CKg0}, 
\begin{equation}\label{localS}  S_t^{0+, p_i} (\gamma ) = e^{u_i/z} \gamma |_{p_i} . \end{equation}
Hence Proposition \ref{fact} gives
the expression \[ J^{0+, p_i} = e^{u_i/z} (r_{i, 0} + O(z)), \] where 
$r_{i, 0}\in \QQ (\lambda ) [[t, q]]$ is the constant term of $P^{0+, p_i}$ in $z$.

\begin{Cor} The equality 
\[ \log J^{0 +, p_i} = u_i/z + \log r_{i, 0} + O(z)  \ \ \in \QQ (\lambda ) ((z)) [[t, q]] \]
holds  as  Laurent series of $z$ over the
coefficient ring $\QQ (\lambda)$ in each power expansion of $t$ and $q$, after 
regarding $t$ as a formal element.
\end{Cor}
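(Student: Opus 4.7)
The plan is to take the formal logarithm of the factorization
\[ J^{0+, p_i} = e^{u_i/z}\bigl(r_{i,0} + O(z)\bigr) \]
established just above the corollary via Proposition \ref{fact} and \eqref{localS}. Once the logarithm is legitimately defined inside $\QQ(\lambda)((z))[[t, q]]$, the identity
\[ \log J^{0+, p_i} = \frac{u_i}{z} + \log r_{i,0} + \log\!\Bigl(1 + r_{i,0}^{-1}\,O(z)\Bigr) \]
is formal, and $\log(1 + r_{i,0}^{-1}O(z)) = O(z)$ follows from the standard expansion $\log(1+x) = x - x^2/2 + \cdots$. Thus the content of the corollary is concentrated in the well-definedness of the three logarithms in the stated ring, not in any computation.

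The main obstacle is that $e^{u_i/z}$ carries infinitely many negative powers of $z$, so a priori $\log J^{0+, p_i}$ looks like the logarithm of an object with infinite negative Laurent tail. I would resolve this by working coefficient by coefficient in the $(t,q)$-expansion. Writing $t = \sum_\alpha t_\alpha \phi_\alpha$ in a basis of $H^*_\T(Y)\otimes\QQ(\lambda)$ and regarding the $t_\alpha$ and $q$ as formal variables, one observes that $u_i = t|_{p_i} + O(q)$ vanishes at $t_\alpha = q = 0$. Consequently, in each fixed monomial $t^a q^b$ of the expansion of $e^{u_i/z}$ only finitely many negative powers of $z$ appear, so $e^{u_i/z}$, and therefore $J^{0+, p_i}$, belongs to $\QQ(\lambda)((z))[[t, q]]$.

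For $\log r_{i,0}$ to be defined by the usual power series, I need $r_{i,0} \in 1 + (t, q)\,\QQ(\lambda)[[t, q]]$. This follows from the initial conditions already recorded in the paper: the defining expansion $J^{0+, p_i} = e^t|_{p_i} + O(q)$ gives $J^{0+, p_i}|_{t=q=0} = 1$, while $u_i|_{t=q=0} = 0$ forces $e^{u_i/z}|_{t=q=0} = 1$; comparing the two factorizations at $t = q = 0$ yields $r_{i,0}|_{t=q=0} = 1$. With all three logarithms now legitimate elements of $\QQ(\lambda)((z))[[t, q]]$, additivity of $\log$ across products together with the trivial estimate $\log(1 + r_{i,0}^{-1}O(z)) = O(z)$ assembles into the claimed formula, and the proof is only a few lines once this bookkeeping is in place.
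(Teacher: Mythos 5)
Your proposal is correct and follows essentially the same route as the paper: the paper's entire proof is the one-line observation that both sides belong to $\QQ(\lambda)((z))[[t,q]]$, and your argument simply spells out why that containment holds (finiteness of negative $z$-powers in each $(t,q)$-monomial from $u_i|_{t=q=0}=0$, and invertibility of $r_{i,0}$ from $r_{i,0}|_{t=q=0}=1$) before taking formal logarithms of the factorization already established via Proposition 2.2 and equation (2.1.3).
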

\begin{proof} It is clear that both side belong to $\QQ (\lambda)((z))[[t, q]]$.  \end{proof}

\begin{Cor}
\begin{align}\label{Dr} D_i |_{t=0}= \frac{1}{r_{i, 0}|_{t=0}}  . \end{align}
\end{Cor}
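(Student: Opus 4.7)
The plan is to combine the asymptotic $J^{0+, p_i} = e^{u_i/z}(r_{i,0} + O(z))$ from the previous corollary with the quantum differential equation for $J^{0+, p_i}$, viewing $u_i$ as a change of coordinates at the fixed point $p_i$. The desired identity will arise as a chain-rule equality between two mutually inverse Jacobians.

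First, I would derive the identity $\partial_{t^\alpha} u_i = \phi_\alpha|_{p_i}\, D_i$ in every direction $\alpha$. Differentiating $u_i = e_i \lla 1,1\rra^{0+, p_i}_{0,2}$ with respect to $t^\alpha$ introduces one additional marking, giving $\partial_{t^\alpha} u_i = e_i \lla 1,1,\phi_\alpha\rra^{0+, p_i}_{0,3}$; the $\T$-equivariant localization in the fixed-point basis (via $\phi_\alpha = \sum_j \phi_\alpha|_{p_j}\,\phi_j$ and $\phi_j|_{p_i} = \delta_{ij}$) factorizes the right-hand side as $\phi_\alpha|_{p_i}\, D_i$. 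Integrating $du_i = D_i\, d(t|_{p_i})$ shows $u_i$ depends on $t$ only through $t|_{p_i}$; writing $u_i = \Phi(t|_{p_i}, q)$, one has $D_i = \partial \Phi/\partial(t|_{p_i})$, and in particular $D_i|_{t=0} = (\partial \Phi/\partial(t|_{p_i}))(0, q)$.

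Next, the quantum differential equation for the $0+$ quasimap $J$-function reads $z\partial_{t^\alpha} J^{0+, p_i} = \phi_\alpha|_{p_i}\, D_i \cdot J^{0+, p_i}$ (using the same $\T$-localization of the quantum product). Substituting $J^{0+, p_i} = e^{u_i/z} P^{0+, p_i}$ together with $\partial_{t^\alpha} u_i = \phi_\alpha|_{p_i}\, D_i$ yields $\partial_{t^\alpha} P^{0+, p_i} = 0$, so each asymptotic coefficient $r_{i,k}$ is a pure $q$-series. It then remains to identify $r_{i,0}(q)$ with the inverse Jacobian $(\partial(t|_{p_i})/\partial u_i)(0, q) = 1/D_i|_{t=0}$; the corollary follows at once by the chain rule.

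This last identification is natural from the graph-space expression $P^{0+, p_i} = e_i \lla 1 \otimes {\bf p}_\infty\rra^{QG^{0+}, p_i}_1$ of Proposition~\ref{fact}: at $z = 0$, $\CC^*$-localization on the base $\PP^1$ restricts the integral to the configurations where the marking lies at $\infty$, and the uniqueness formula $S^{0+, p_i}_t(\gamma) = e^{u_i/z}\gamma|_{p_i}$ (with no $R$-matrix correction) matches $r_{i,0}(q)$ cleanly with the reciprocal of the mirror-map derivative. The main obstacle is precisely this last matching: although structurally it reflects a standard Frobenius-manifold-type Jacobian-vs-asymptotic relation, carrying it out cleanly for the quasimap $\ke = 0+$ theory requires careful $\CC^*$-localization bookkeeping on $QG^{0+}_{0,1,\beta}(Y)^{\T, p_i}$ and its comparison with the derivative of $\Phi$ at the origin.
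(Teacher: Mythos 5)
Your first step ($\partial_{t^\alpha}u_i = \phi_\alpha|_{p_i}\,D_i$) is fine, but the quantum differential equation you write for the $J$-function, $z\partial_{t^\alpha}J^{0+,p_i} = \phi_\alpha|_{p_i}D_i\, J^{0+,p_i}$, is not correct in the $\ke=0+$ theory, and this is fatal to the argument. Differentiating the double bracket adds a marking and turns $\tfrac{1}{z(z-\psi)}$ into $\tfrac{1}{z-\psi}$ after multiplying by $z$, so the correct relation is $z\partial_{t^\alpha}J^{0+,p_i} = S_t^{0+,p_i}(\phi_\alpha) = e^{u_i/z}\phi_\alpha|_{p_i}$. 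Your version amounts to assuming that the Birkhoff factor $P^{0+,p_i}$ of Proposition \ref{fact} equals $1/D_i$ identically in $z$, which is false (the higher coefficients $r_{i,k}$, $k\ge 1$, are nonzero), and your intermediate conclusion $\partial_{t^\alpha}P^{0+,p_i}=0$ is also false: the coefficients depend on $t$ through $qe^{t_H}$ by the divisor property (compare \eqref{asymJ} and the computation of $\mathds{R}_{i,k}$ in \S 3). Worse, with your QDE the two sides cancel identically and you extract no relation between $r_{i,0}$ and $D_i$; this is exactly why you are forced to defer the identity $r_{i,0}D_i=1$ to an unexecuted ``matching'' step, which you yourself flag as the main obstacle. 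But that matching \emph{is} the corollary, so the proof is not closed.

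The repair is short and is what the paper does: expand the exact identity $J^{0+,p_i}=S_t^{0+,p_i}(P^{0+,p_i})$ of \eqref{Der_Dr} to first order in $t$, using the correct derivative $\partial_t J^{0+,p_i}|_{t=0} = \tfrac{1}{z}S^{0+,p_i}_{t=0}(t)$ on the left, and $S_t^{0+,p_i}(\gamma)=e^{u_i/z}\gamma|_{p_i}$ together with $\partial_t u_i = t|_{p_i}D_i$ on the right. After multiplying by $e^{-u_i|_{t=0}/z}$, compare the coefficients of $tz^{-1}$: since $P^{0+,p_i}$ involves only nonnegative powers of $z$, its $t$-derivative contributes nothing to the $z^{-1}$ term, and only the constant term $r_{i,0}$ survives on the right, giving $1 = r_{i,0}|_{t=0}\,D_i|_{t=0}$, i.e.\ \eqref{Dr}.
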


\begin{proof} By \eqref{Der_Dr} at $t=0$, \eqref{localS} with $\gamma =1$, and  the definition of $J^{0+, p_i}$ we see that
\begin{equation}\label{Iexp} J^{0 +, p_i}  = e^{u_i |_{t=0}/z} P^{0+, p_i} |_{t=0}  +  \frac{t}{z}S_{t=0}^{0+, p_i}(1) + O(t^2) . \end{equation}
Also by \eqref{localS} with $\gamma = 1$ and the definition of $S_t^{0+, p_i}$ we see that
\begin{equation}\label{Sexp} S_t^{0+, p_i} = e^{u_i|_{t=0}/z} (1 + \frac{t}{z} (D_i|_{t=0}) ) + O(t^2) . \end{equation}
The multiplication of $e^{-u_i|_{t=0}/z}$ and \eqref{Der_Dr} after the replacements of \eqref{Iexp}, \eqref{Sexp} gives
\[   P^{0+, p_i} |_{t=0}  + \frac{t}{z} + O(t^2) = P^{0+, p_i} |_{t=0} (1 + D_i|_{t=0} \frac{t}{z}  )   + O(t^2) . \] 
Now the comparison of the $\frac{t}{z}$-coefficient yields \eqref{Dr}.
 \end{proof}

\subsection{Insertions of $0+$ weighted markings}

To break the symmetry of the localization computation for the virtual fundamental classes of the elliptic quasimap moduli 
spaces we will need to introduce a marking. However, to keep the relation \eqref{functorial} even with markings for $g=1$, we will use 
the infinitesimally (i.e., $0+$) weighted markings. 

Denote by $$Q^{0+, 0+}_{g, k|m}(Y, \beta) \text{ (resp. } QG^{0+, 0+}_{0, k|m, \beta} (Y))  $$
the (resp. graph) moduli space of genus $g$ (resp. genus zero), degree class $\beta$ stable quasimaps to $Y$ with
ordinary $k$ pointed markings and infinitesimally weighted $m$ pointed markings (see \S 2, \S 5  of \cite{BigI}).   
They are isomorphic to the universal curve $\mathcal{C}$ of $Q^{0+}_{g, k|m-1}(Y, \beta)$ (resp $QG^{0+}_{0, k|m-1, \beta} (Y)$).
Denote by $$ Q^{0+, 0+}_{g, k|m}(Y, \beta )^{\T, p_i}, \text{  (resp. } QG^{0+, 0+}_{0, k|m, \beta} (Y)^{\T, p_i}) $$
the $\T$-fixed 
part of $Q^{0+, 0+}_{g, k|m}(Y, \beta)$, (resp. $QG^{0+, 0+}_{0, k|m, \beta} (Y)$) whose domain components are
only over $p_i$.

For $\gamma_i \in H^*_{\T} (Y)\ot \QQ (\lambda )$, $\tilde{t}, \delta _j \in H^*_{\T} ([V/\G ], \QQ )$ denote

\begin{align*}  & \lan \gamma _1\psi  ^{a_1} , ..., \gamma _k\psi  ^{a_k} ;  \delta _1, ..., \delta _m \ran _{0, k|m, \beta}^{0+, 0+}  \\ & : =
\int _{\ret(\pi _*f^* \tilde{E})\cap [Q^{0+, 0+}_{0, k|m} (Y, \beta) ]^{\vir}} \prod _i ev_i^*(\gamma _i)\psi _i ^{a_i} \prod _j \hat{ev}_j ^* (\delta _j) \ \ ; \\
&  \lla \gamma _1\psi  ^{a_1} , ..., \gamma _k\psi  ^{a_k}  ;  \delta _1, ..., \delta _m \rra _{0, k}^{0+, 0+} \\
& := \sum _{m', \beta} \frac{q^{\beta}}{m'!}
 \lan    \gamma _1\psi  ^{a_1} , ..., \gamma _k\psi  ^{a_k} ; \delta _1, ..., \delta _m, \tilde{t}, ..., \tilde{t}  \ran_{0, k|m+m', \beta}^{0+, 0+} \ \  ; \\ 
 & \lan \gamma _1\psi  ^{a_1} , ..., \gamma _k\psi  ^{a_k} ;  \delta _1, ..., \delta _m \ran _{0, k|m, \beta}^{0+, 0+, p_i}  \\ & :=
\int _{\frac{\ret(\pi _*f^* \tilde{E})\cap [Q^{0+, 0+}_{0, k|m} (Y, \beta) ^{\T, p_i}]^{\vir}}{\ret (N^{\vir}_{Q^{0+, 0+}_{0, k|m} (Y, \beta) ^{\T, p_i}/
Q^{0+, 0+}_{0, k|m}(Y, \beta)} )}} \prod _i ev_i^*(\gamma _i)\psi _i ^{a_i} \prod _j \hat{ev}_j ^* (\delta _j) \ \ ; \\
&   \lla \gamma _1\psi  ^{a_1} , ..., \gamma _k\psi  ^{a_k} ; \delta _1, ..., \delta _m \rra _{0, k|m}^{0+, 0+, p_i} \\
& := \sum _{m', \beta} \frac{q^{\beta}}{m'!}
 \lan    \gamma _1\psi  ^{a_1} , ..., \gamma _k\psi  ^{a_k} ; \delta_1, ...., \delta _m, \tilde{t}, ..., \tilde{t}  \ran_{0, k|m+m', \beta}^{0+, 0+, p_i}  
 \ \   ,
 \end{align*}
where $\hat{ev}_j$ is the evaluation map to $[V/\G]$ at the $j$-th infinitesimally weighted  marking.
Here and below by {\em double brackets with superscript $0+, 0+$}, denote the sum over all degree class $\beta$ and all possible $\tilde{t}$ insertions
{\em only at the infinitesimally weighted markings}.
Similarly we define \[ \lan \cdots ; \cdots \ran^{QG^{0+, 0+}}_{k|m, \beta} \text{ and } \lla \cdots ; \cdots \rra ^{QG^{0+, 0+}, p_i}_{k|m} . \]

Consider
\begin{align*}
\mathds{S}(\gamma) & : =  \sum _{i}  \phi ^i  \lla \frac{\phi _i}{z-\psi} , \gamma \rra _{0, 2}^{0+, 0+}\ \ ;  \\
\mathds{V}_{ii} (x, y)  & := 
\lla \frac{\phi _i}{x- \psi } ,  \frac{\phi _i}{y - \psi } \rra _{0, 2}^{0+, 0+} 
= \frac{1}{e_i(x+y)} + O(q) \ \ ; \\
 \mathds{U}_i & :=  e_i 
 \lla 1, 1 \rra_{0, 2}^{0+, 0+,  p_i}    =  \tilde{t}|_{p_i} + O(q)  \ \ ; \\
\mathds{S}_i^{0+, p_i} (\gamma) & := e_i 
 \lla \frac{1}{z-\psi} , \gamma \rra _{0, 2}^{0+, 0+,  p_i}   = e^{\tilde{t}/z}\gamma |_{p_i}+ O(q)\ \ ;\\
\mathds{J}^{0+, p_i} &:= e_i 
 \lla \frac{1}{z(z-\psi)}  \rra_{0, 1 }^{0+, 0+, p_i} = e^{\tilde{t}}|_{p_i} + O(q) = J^{0+, p_i}|_{t=0} + O(\tilde{t}) .
 \end{align*}
(Here $e_i^2 \mathds{V}_{ii}$ at $\tilde{t}=0$ coincides with $V^{0+}_{t=0}|_{p_i}$ of \cite{CKg}.)

As before, 
\begin{align} \mathds{S}_i^{0+, p_i} (\gamma)  &= e^{\mathds{U}_i/z} \gamma |_{p_i} \ \ ; \nonumber \\
\label{asymJ}  \mathds{J}^{0+, p_i} & = e^{\mathds{U}_i/z} (\sum _{k=0}^m \mathds{R}_{i,k}z^k + O(z^{m+1})) 
\end{align} 
for some unique $\mathds{R}_{i,k} \in \QQ (\lambda)[[\tilde{t}, q]]$ (after regarding $\tilde{t}$ as a formal 
element).

\subsection{Birkhoff factorization}\label{inf_I}

In this subsection we do not need to assume that the $\T$ action on $Y$ has isolated fixed points. 
Therefore in this subsection $\{ \phi _i\} _i $ will denote  any chosen basis of $H^*_{\T}(Y) \ot \QQ (\lambda )$ with
its $E$-Poincar\'e dual basis $\{\phi ^i\}_{i}$.

Denote by $\mathds{I}$ the infinitesimal $I$-function $\mathds{J}^{0+, 0+}$ defined in \cite{BigI}.
The $\mathds{S}$ introduced in the previous section is, by the very definition, the infinitesimal $S$-operator $\mathds{S}^{0+, 0+}$ defined in \cite{BigI}. Hence
\[ \mathds{I} := \mathds{J}^{0+, 0+} \ \text{ and }  \ \mathds{S} := \mathds{S}^{0+, 0+} . \]
For $\gamma \in H^*_{\T}(Y)$, $\tilde{\gamma}\in H^*_{\T} ([V/\G])$ denotes a lift of $\gamma$, i.e., $\tilde{\gamma} |_Y = \gamma$.

 Let ${\bf p}_0$ be the equivariant cohomology class 
$H^*_{\CC ^*} (\PP ^1)$ defined by 
\[ {\bf p}_{0} |_{0} = z, \ {\bf p}_{0} |_{\infty} = 0 .\]
Consider
$$P_{\tilde{\gamma}} :=  \sum _{i} \phi ^i  \lla \phi_i \ot \mathbf{p}_\infty ; \tilde{\gamma} \ot \mathbf{p}_0 \rra ^{QG^{0+, 0+}}_{0, 1|1} 
\in H^*_{\T}(Y)\ot \QQ (\lambda )[z] [[\tilde{t}, q]] $$ and its virtual 
$\CC^*$ localization factorization.  As in Proposition 4.3 of \cite{BigI}, 
there is a Birkhoff factorization  
\begin{align}\label{Bfactor} z \partial _{\gamma} \mathds{I} :=  z\frac{d}{ds}|_{s=0} \mathds{I}(\tilde{t}+ s\tilde{\gamma} )  = \mathds{S} (P_{\tilde{\gamma}}) . \end{align}

Since $P_{\tilde{\gamma}} = \gamma + O(q )$, the factorization \eqref{Bfactor}  implies that for each $\tilde{t}$
there is a unique expression of $\mathds{S} (\gamma)$ as a linear combination 
of $\partial _{\phi _i} \mathds{I}$ with coefficients in $\QQ (\lambda)[z][[q]]$.
Hence we conclude the following Proposition.

\begin{Prop}\label{BP}  For each $\tilde{t}$, there are unique coefficients $ a_i (z, q) \in \QQ (\lambda)[z][[q]]$ making
 \[ \sum _{i} a_i (z, q)  z\partial _{\phi_i} \mathds{I}  = \gamma + O(1/z) .\] Furthermore
 LHS coincides with  $\mathds{S}(\gamma)$.
\end{Prop}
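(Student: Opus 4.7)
The plan is to deduce Proposition~\ref{BP} from the Birkhoff factorization \eqref{Bfactor} together with an invertibility argument for the $\QQ(\lambda)[z][[q]]$-linear map $\tilde{\gamma}\mapsto P_{\tilde{\gamma}}$. First I would fix $\tilde{t}$ and expand
\[ P_{\tilde{\phi_i}}=\sum_j n_{ij}(z,q)\,\phi_j,\qquad n_{ij}\in\QQ(\lambda)[z][[q]]. \]
Since $P_{\tilde{\gamma}}=\gamma+O(q)$, we have $n_{ij}=\delta_{ij}+O(q)$, and the matrix $N=(n_{ij})$ is therefore invertible over $\QQ(\lambda)[z][[q]]$ by the usual geometric-series inversion of $\mathrm{id}+qM$.

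For existence, given $\gamma=\sum_j c_j\phi_j$ with $c_j\in\QQ(\lambda)$ I set $a_i:=\sum_j c_j(N^{-1})_{ji}$, so that $\sum_i a_i P_{\tilde{\phi_i}}=\gamma$ in $H^*_{\T}(Y)\otimes\QQ(\lambda)[z][[q]]$. Extending $\mathds{S}$ linearly over $\QQ(\lambda)[z][[q]]$ and applying \eqref{Bfactor} termwise yields
\[ \sum_i a_i(z,q)\,z\partial_{\phi_i}\mathds{I}\;=\;\sum_i a_i\,\mathds{S}(P_{\tilde{\phi_i}})\;=\;\mathds{S}(\gamma). \]
The explicit form of the $S$-operator gives $\mathds{S}=\mathrm{id}+O(1/z)$ in its $1/z$-expansion, so $\mathds{S}(\gamma)=\gamma+O(1/z)$. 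This simultaneously establishes existence and the final assertion that the LHS coincides with $\mathds{S}(\gamma)$.

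For uniqueness, suppose $a_i'\in\QQ(\lambda)[z][[q]]$ is any other solution and set $f:=\sum_i(a_i-a_i')P_{\tilde{\phi_i}}\in H^*_{\T}(Y)\otimes\QQ(\lambda)[z][[q]]$. By the same computation as above, $\mathds{S}(f)=O(1/z)$. Since $\mathds{S}=\mathrm{id}+\sum_{k\geq 1}S_k/z^k$ is an invertible operator in the matrix ring with entries in $\QQ(\lambda)((1/z))[[q]]$, applying $\mathds{S}^{-1}=\mathrm{id}+O(1/z)$ gives $f=O(1/z)$. But $f$ is polynomial in $z$ (each $q$-coefficient lies in a polynomial ring in $z$), so the only way for $f$ to be $O(1/z)$ is $f=0$. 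Invertibility of $N$ then forces $a_i=a_i'$.

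The one genuinely delicate point is the interplay between two opposite formal structures: the entries of $\mathds{S}$ are formal power series in $1/z$, while the inputs $P_{\tilde{\phi_i}}$ and the ansatz $a_i$ are polynomial in $z$. The main thing to watch is that \lq\lq LHS$=\gamma+O(1/z)$" is read in the Laurent ring $\QQ(\lambda)((1/z))[[q]]$ so that the nonnegative $z$-powers are uniquely identified; once this is pinned down, the proof reduces to the matrix inversion of $N$ mod $q$, in direct analogy with the Birkhoff argument of Proposition~4.3 of \cite{BigI}.
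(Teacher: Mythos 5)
Your argument is correct and is essentially the paper's own: the paper likewise deduces the proposition from the factorization \eqref{Bfactor} together with the observation that $P_{\tilde{\gamma}}=\gamma+O(q)$, which makes the change-of-basis matrix invertible over $\QQ(\lambda)[z][[q]]$ and identifies $\sum_i a_i\, z\partial_{\phi_i}\mathds{I}$ with $\mathds{S}(\gamma)$. Your write-up merely makes explicit the matrix inversion and the uniqueness step (polynomiality in $z$ versus the $O(1/z)$ condition) that the paper leaves implicit.
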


\subsection{Genus one theory}\label{one_theory}

From now on, we assume that the Calabi-Yau condition holds, i.e., 
\[ c_1(Y) - c_1 (\tilde{E}|_Y ) = 0 \text{ in } H^2(Y, \QQ) . \] 
We apply Givental's localization method \cite{Elliptic} to express 
a genus one generating function in terms of the genus zero generating functions.

Consider the genus one generating function with one insertion at an infinitesimally (i.e., $0+$) weighted marking:
\begin{align*}  \lan  ;  \tilde{\gamma} \ran ^{0+, 0+}_{1, 0|1} 
&:= \sum _{d=1}^{\infty} q^d \lan ; \tilde{\gamma} \ran ^{0+, 0+}_{1, 0|1, d} ,  \end{align*}
where $\tilde{\gamma} \in H^2_{\T} ([V/\G], \QQ)$. We will study the generating function using the virtual 
$\T$ localization. 

In the following conjecture, 
 $c_i(\lambda)$ denotes the element in  $\QQ (\lambda)$ uniquely determined by 
\begin{align*} 1+ c_i(\lambda) \re (\mathbb{E})  
&  = \frac{\re ^\T(\mathbb{E}^\vee \ot T_{p_i}Y ) \re ^\T ( \tilde{E} |_{p_i})}
{\re ^\T(T_{p_i} Y) \re ^\T (\mathbb{E}^\vee \ot \tilde{E}|_{p_i} )} ,
\end{align*} where
$\mathbb{E}$ is the Hodge bundle on the moduli stack $\overline{M}_{1,1}$ of stable one pointed genus 1
curves.

\begin{Conj}\label{GConj}  For $\tilde{\gamma}  \in H^2_{\T} ([V/\G], \QQ )$
\begin{align}\label{Gexp}  &  \lan  ; \tilde{\gamma} \ran ^{0+, 0+}_{1, 0|1}    =  \sum _i
 q_{\tilde{\gamma} }\frac{\partial}{\partial q_{\tilde{\gamma} }}  \left(-\frac{\log \mathds{R}_{i,0}|_{\tilde{t}=0}}{24}   +  
c_i(\lambda) \frac{\mathds{U}_i|_{\tilde{t}=0}}{24} \right) \\ 
&  \label{loop} 
+ \frac{1}{2} \sum _i  
\left( \partial _{\tilde{\gamma} } \mathds{U}_i |_{\tilde{t}=0} \right)
\lim_{(x,y)\ra (0,0)}\left((e^{-\mathds{U}_i(\frac{1}{x} + \frac{1}{y})} e_i\mathds{V}_{ii}(x, y) - \frac{1}{x+y} )  |_{\tilde{t}=0} \right) ,
\end{align}
where $q_{\tilde{\gamma} }\frac{\partial}{\partial q_{\tilde{\gamma} }}$ acts on  $q^{\beta}$ by
$q_{\tilde{\gamma} } \frac{\partial}{\partial q_{\tilde{\gamma} }} q^{\beta} =  q^{\beta} \int _{\beta} \tilde{\gamma}  $.
\end{Conj}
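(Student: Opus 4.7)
The plan is to apply virtual $\T$-localization to the left hand side of \eqref{Gexp}, adapting Givental's strategy from \cite{Elliptic} to the quasimap setting. First I would decompose the $\T$-fixed locus of $Q^{0+, 0+}_{1, 0|1}(Y, \beta)$ by the combinatorial type of the dual graph of the $\T$-fixed quasimap, distinguishing (a) \emph{tree-type} loci, where the arithmetic genus one is carried by a single contracted elliptic vertex sitting over some $\T$-fixed point $p_i$, with rational quasimap tails glued at $p_i$, and (b) \emph{loop-type} loci, where the arithmetic genus is produced by a cycle of two rational components joining two nodes over $p_i$. Since $\tilde{\gamma} \in H^2_{\T}([V/\G], \QQ)$, the divisor-like action of the evaluation at the $0+$-weighted marking on the degree factor $q^\beta$ will naturally produce the operator $q_{\tilde{\gamma}}\partial/\partial q_{\tilde{\gamma}}$.

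The tree-type contributions over $p_i$ reduce to an integral over $\overline{M}_{1,1}$ of the product of the inverse of the virtual normal bundle, the $\T$-equivariant Euler class of $\pi_*f^*\tilde{E}$, and a rational-tail factor built out of the genus zero theory. A direct computation of the normal-bundle contribution at the contracted elliptic vertex, combined with the $\tilde{E}$-twist, yields exactly the combination $1+c_i(\lambda)\,\re(\mathbb{E})$, so that after integrating $\re(\mathbb{E})$ over $\overline{M}_{1,1}$ against $\tfrac{1}{24}$ one obtains the $c_i(\lambda)\mathds{U}_i/24$ piece. The rational tails at $p_i$ are encoded in $\mathds{J}^{0+, p_i}$ and its $\tilde{\gamma}$-derivatives, and applying the Birkhoff factorization of \S \ref{inf_I} together with Proposition \ref{BP} and the asymptotic expansion \eqref{asymJ} isolates the leading coefficient $\mathds{R}_{i,0}$, while the $\mathds{S}_i^{0+, p_i}$-dressing produces the exponential factor $e^{\mathds{U}_i/z}$. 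Differentiation along the direction $\tilde{\gamma}$, which only appears through the Novikov variable $q_{\tilde{\gamma}}$ by the divisor property, then yields the first line of \eqref{Gexp}.

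For the loop contributions, I would identify a cycle of two rational pieces meeting over $p_i$ with a self-edge whose propagator is $e_i\mathds{V}_{ii}(x,y)$, where $x, y$ are the psi classes at the two nodes. The $\mathds{S}_i^{0+, p_i}$-dressings on both sides of the edge account for the multiplicative factor $e^{-\mathds{U}_i(\frac{1}{x} + \frac{1}{y})}$, and the subtraction $-\frac{1}{x+y}$ removes the unstable domain configurations that must be excluded from the fixed-point integral; the factor $1/2$ is the automorphism of the two-edge cycle. Taking the residue $(x, y) \to (0, 0)$ yields the finite limit in the second line of \eqref{Gexp}, and the $\tilde{\gamma}$-insertion on one of the two cycle components accounts for the overall coefficient $\partial_{\tilde{\gamma}}\mathds{U}_i$.

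The main obstacle will be the tree-type contributions: the Hodge bundle appears in combination with both the virtual normal bundle of the elliptic vertex and the twist by $\tilde{E}$, and reducing the total contribution to the clean expression $1+c_i(\lambda)\re(\mathbb{E})$ requires careful use of the Calabi-Yau condition $c_1(Y) = c_1(\tilde{E}|_Y)$ as assumed in \S \ref{one_theory}. A secondary difficulty is converting the full Birkhoff factorization into a statement about only the leading coefficient $\mathds{R}_{i,0}$, rather than the entire tower of $\mathds{R}_{i,k}$; however, the uniqueness lemma of \S 7.7 of \cite{CKg0} and its infinitesimal analogue embedded in Proposition \ref{BP} should make this reduction conceptual, so that the remaining work is a careful bookkeeping exercise along Givental's template.
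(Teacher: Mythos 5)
Your overall template (virtual $\T$-localization, the divisor property converting the $\tilde{\gamma}$-insertion into $q_{\tilde\gamma}\partial/\partial q_{\tilde\gamma}$, and loop contributions resummed by the propagator $e_i\mathds{V}_{ii}(x,y)$ with the $\frac{1}{x+y}$ subtraction and the symmetry factor $\tfrac12$) matches the paper's strategy, and your loop-term discussion is essentially the argument the paper imports from Givental. The gap is in the vertex analysis, where you have transplanted the stable-maps fixed-point structure into a setting where it does not exist. For $\ke=0+$ stability a rational component must carry at least two special points, and $0+$-weighted markings do not count toward stability, so there are no rational tails: a contracted elliptic vertex cannot carry genus-zero tails, and all the degree at a fixed point is concentrated in base points at the weighted markings. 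Consequently the $p_\sigma$-vertex locus is not $\overline{M}_{1,1}$ dressed by $\mathds{J}^{0+,p_i}$; it is $\overline{M}_{1,0|\beta_\sigma}$ modulo the finite group of order $\prod_{\rho\not\subset\sigma}\beta(\rho)!$, and the localization class $F^{(1,0)}_{\sigma,\beta}$ is a polynomial in the diagonal classes $\Delta_J$ and the classes $\hat\psi_\bullet$. This is precisely the ``nontrivial modification'' of Givental's procedure that the paper flags, and your outline skips it.

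Because of this, your claim that Birkhoff factorization together with \eqref{asymJ} ``isolates the leading coefficient $\mathds{R}_{i,0}$'' cannot produce the term $-\tfrac{1}{24}\log\mathds{R}_{i,0}$: Proposition \ref{BP} relates $\mathds{S}$ to derivatives of $\mathds{I}$ and yields no logarithm. In the paper the logarithm arises from an exponential formula for the diagonal-class combinatorics: the multiplicativity $A^{\beta}_{J_1,\dots,J_k}=\prod_i A^{\beta_{J_i}}$ together with the evaluations \eqref{int1} and \eqref{int2} shows that $\sum_{\beta\ne 0}\frac{q^\beta}{\prod_\rho\beta(\rho)!}\int_{\overline{M}_{1,0|\beta_\sigma}}F^{(1,0)}_{\sigma,\beta}=\tfrac{1}{24}\log D_\sigma|_{t=0}$, and only then does the separate genus-zero identity \eqref{Dr}, namely $D_i|_{t=0}=1/r_{i,0}|_{t=0}$ (proved via the factorization $J^{0+,p_i}=S^{0+,p_i}_t(P^{0+,p_i})$ of Proposition \ref{fact}), convert this into $-\tfrac{1}{24}\log\mathds{R}_{\sigma,0}|_{\tilde t=0}$. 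Likewise the $c_\sigma(\lambda)\mathds{U}_\sigma/24$ term comes from $\int_{\overline{M}_{1,0|\beta_\sigma}}\re(\mathbb{E})\,F^{(1,0)}_{\sigma,\beta}=\tfrac{1}{24}\int_{\overline{M}_{0,2|\beta_\sigma}}F^{(0,2)}_{\sigma,\beta}$, not from an $\overline{M}_{1,1}$ integral against rational tails. These two steps are the substance of the proof and are absent from your proposal.
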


We prove Conjecture \ref{GConj} in the following toric setting.
Let $Y$ be a projective smooth toric variety defined by
a fan $\Sigma$. Let $\Sigma (1)$ be the collection of all 1-dimensional cones $\rho$ in $\Sigma$ and let 
$V=\CC ^{\Sigma (1)}$. Then $Y$ is also given by a GIT quotient $\CC ^{\Sigma (1)}/\!\!/_{\theta}\G$
for the complex torus $\G = (\CC ^*)^{|\Sigma (1) |- \dim Y}$ and some character $\theta$ of $\G$. Denote by $\T$ the big torus $(\CC ^*)^{\Sigma (1)}$.
Let $E$ and $W$ be as in the beginning of \S\ref{local_expression_section}.

\begin{Thm}\label{G1}
Conjecture \ref{GConj} holds true for the toric setting.
\end{Thm}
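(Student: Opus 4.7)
The plan is to apply $\T$-equivariant virtual localization to $Q^{0+,0+}_{1,0|1,\beta}(Y)$, adapting Givental's elliptic localization strategy \cite{Elliptic} to the quasimap setting with an infinitesimally weighted insertion. Because $Y$ is toric, every $\T$-fixed locus of $Q^{0+,0+}_{1,0|1,\beta}(Y)$ is indexed by a decorated graph whose vertices are labeled by fixed points $p_i$, whose edges record stable quasimap multi-covers of the one-dimensional $\T$-orbits joining adjacent $p_i,p_j$, and whose unique infinitesimally weighted marking is attached at one distinguished vertex. The source of arithmetic genus one has only two possibilities: (a) a distinguished vertex $v_\star$ carries a smooth elliptic component contracted to some $p_i$, with genus-zero trees and edges attached at its flags, or (b) the genus is supplied by a single loop in the graph, closing at a fixed node.

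For contributions of type (a), the vertex factor at $v_\star$ is a Hodge integral on $\overline{M}_{1,1}$ whose equivariant integrand is precisely the ratio defining $c_i(\lambda)$ in Conjecture \ref{GConj}. Expanding in $c_1(\mathbb{E})$ and applying the standard $\overline{M}_{1,1}$ Hodge intersection numbers produces a scalar involving $c_i(\lambda)/24$. The genus-zero trees attached at the flags of $v_\star$ are $\T$-local quasimap-graph-space strata whose generating series is exactly $\mathds{J}^{0+,p_i}$; the Birkhoff factorization of Proposition \ref{BP} combined with the asymptotic expansion \eqref{asymJ} reorganizes them in terms of $\mathds{U}_i$ and $\mathds{R}_{i,0}$. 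Because the lone insertion $\tilde{\gamma}\otimes\mathbf{p}_0$ collapses to a single Novikov derivative via a divisor-type identity, these ingredients assemble into $q_{\tilde{\gamma}}\partial/\partial q_{\tilde{\gamma}}$ acting on $-(\log\mathds{R}_{i,0}|_{\tilde{t}=0})/24 + c_i(\lambda)(\mathds{U}_i|_{\tilde{t}=0})/24$, reproducing the first line of \eqref{Gexp}.

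For contributions of type (b), the two legs meeting at the loop node at some $p_i$ give rise to two copies of the twisted two-pointed genus-zero series, packaged into $e_i\mathds{V}_{ii}(x,y)$ with $x,y$ matching the two smoothing parameters. The translation factor $e^{-\mathds{U}_i(1/x + 1/y)}$ records the $S$-operator shift forced by the asymptotic expansion of $\mathds{J}^{0+,p_i}$, and the subtraction of $1/(x+y)$ removes the $q^0$ constant-map piece, which has no genus-one geometric origin. The outer derivative $\partial_{\tilde{\gamma}}\mathds{U}_i$ appears because the marking can decorate either of the two legs, and the overall $\tfrac12$ is the standard loop-automorphism factor. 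After the $(x,y)\to(0,0)$ smoothing limit this reproduces the second line of \eqref{loop}.

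The main obstacle will be the combinatorial bookkeeping: making the Birkhoff reorganization compatible with the flag-by-flag factorization of the vertex contribution, identifying the Hodge-integral factor with the stated $c_i(\lambda)$ precisely, and controlling the regularization in the loop limit. The Calabi-Yau hypothesis $c_1(Y) = c_1(\tilde{E}|_Y)$ enters throughout to ensure that the string and dilaton identities needed to convert Givental's toric elliptic Gromov-Witten computation into the $\epsilon = 0+$ quasimap framework go through without anomaly.
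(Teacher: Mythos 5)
Your overall strategy---$\T$-equivariant virtual localization on $Q^{0+,0+}_{1,0|1}(Y,\beta)$, splitting the fixed loci into vertex and loop types, and matching the loop contribution to \eqref{loop} by Givental's argument---is the same as the paper's, and your treatment of the loop terms is essentially correct (the paper likewise defers that part to the procedure of \cite{Elliptic}). The gap is in the vertex analysis, where you have imported the Gromov--Witten fixed-point geometry rather than the quasimap one. In the $\ke=0+$ theory there are no genus-zero trees or edges attached at flags of a contracted elliptic component: rational tails are unstable, so all the degree at a vertex over $p_\sigma$ is carried by \emph{base points}, i.e.\ the fixed locus is $\overline{M}_{1,0|\beta_\sigma}$ modulo the finite group $\prod_{\rho\not\subset\sigma}\beta(\rho)!$, with the integrand $F^{(1,0)}_{\sigma,\beta}$ a product of Euler classes of bundles $H^0(C,\mathcal{O}_D(B))$ supported on the infinitesimally weighted markings. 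Consequently your mechanism for producing $-\tfrac{1}{24}\log\mathds{R}_{i,0}$---reorganizing tail contributions via Proposition \ref{BP} and \eqref{asymJ}---does not apply; there are no tails to reorganize.

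What the paper actually does at the vertex, and what is genuinely new relative to \cite{Elliptic}, is an analysis of $F^{(g,m)}_{\sigma,\beta}$ as a polynomial in the diagonal classes $\Delta_J$ and the classes $\hat{\psi}_\bullet$ on $\overline{M}_{1,0|\beta_\sigma}$: using $\re(\mathbb{E})\hat{\psi}_\bullet=0$ and $\hat{\psi}_\bullet^2=0$, the Hodge part reduces to $\tfrac{1}{24}\int_{\overline{M}_{0,2|\beta_\sigma}}F^{(0,2)}_{\sigma,\beta}$ (yielding $c_\sigma(\lambda)\mathds{U}_\sigma/24$), while the multiplicativity \eqref{prod} of the coefficients $A^\beta_{J_1,\dots,J_k}$ over partitions, combined with \eqref{int1}--\eqref{int2}, gives an exponential formula identifying $\sum_\beta \frac{q^\beta}{\prod\beta(\rho)!}\int_{\overline{M}_{1,0|\beta_\sigma}}F^{(1,0)}_{\sigma,\beta}$ with $\tfrac{1}{24}\log D_\sigma|_{t=0}$; only then does the genus-zero identity \eqref{Dr}, $D_i|_{t=0}=1/r_{i,0}|_{t=0}$, convert this into $-\tfrac{1}{24}\log\mathds{R}_{\sigma,0}|_{\tilde t=0}$. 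None of these steps appears in your proposal, and without them the first line of \eqref{Gexp} is not established. Your appeal to string/dilaton identities and the Calabi--Yau condition is also misplaced here; the Calabi--Yau hypothesis enters to make the virtual dimension zero and the generating function well defined, not to repair any anomaly in those identities.
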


\subsection{The proof of Theorem \ref{G1}}\label{toric_case}
There is a natural 1-1 correspondence between the $\T$ fixed points of $Y$ and the maximal cones of $\Sigma$.
For a maximal cone $\sigma$, denote by $p_\sigma$ the corresponding $\T$ fixed point.
The $\T$-fixed loci of $Q^{0+, 0+}_{1, 0| 1}(Y, \beta)$ are divided into two types according to whether
the domain curves are irreducible or not. 
A quasimap in $Q^{0+, 0+}_{1, 0| 1}(Y, \beta)^\T$ will be called  a {\em vertex type} over $p_\sigma$  
if all domain components of the quasimap are all over $p_\sigma$.
Otherwise, the quasimap will be called a {\em loop type}. The loop type quasimap is called a loop type over $p_\sigma$
if the marking of the quasimap is over $p_\sigma$. 

Let $Q_{vert, \sigma} ^\T$ be the substack of $Q^{0+, 0+}_{1, 0|1}(Y, \beta) ^\T$ consisting of the vertex types over $p_\sigma$.
Let $Q_{loop, \sigma} ^\T$ be the substack of $Q^{0+, 0+}_{1, 0|1}(Y, \beta ) ^\T$ consisting of the loop types  over $p_\sigma$.

By the virtual localization theorem, 
$ \lan  ; \tilde{\gamma} \ran ^{0+, 0+}_{1, 0|1}  $ is the sum of the localization contribution
$\mathbf{Vert}^{\tilde{\gamma}}_{\sigma}$ from all the vertex types over $p_\sigma \in Y^\T$ and the localization 
contribution $\mathbf{Loop}^{\tilde{\gamma}}_\sigma$
from all the loop types over $p_\sigma\in Y^\T$.
 That is,
\begin{align*}   \lan  ; \gamma \ran ^{0+, 0+}_{1, 0|1}    & := \sum _{\sigma} \mathbf{Vert}^{\tilde{\gamma}}_{\sigma}  
+ \sum _{\sigma} \mathbf{Loop}^{\tilde{\gamma}}_{\sigma}, \end{align*} 
where
\begin{align*}
\mathbf{Vert}^{\tilde{\gamma}}_{\sigma} & := \sum _{\beta \ne 0} q^{\beta}  \int _{[Q_{vert, \sigma} ^{\T}]^{\vir}} 
\frac{\ret(\pi_*f^* \tilde{E})|_{Q_{vert,\sigma}^{\T}} \hat{ev}^*_1 (\tilde{\gamma}) }{\ret(N^{\mathrm{vir}}_{Q_{vert, \sigma}^{\T}/Q^{0+, 0+}_{1,0|1}(Y, \beta)} ) } ,
\\ \mathbf{Loop}^{\tilde{\gamma}}_{\sigma}  & := \sum_{\beta \ne 0}  q^{\beta}  \int _{[Q_{loop, \sigma} ^{\T}]^{\vir}} 
\frac{\ret(\pi_*f^* \tilde{E} )|_{Q_{loop,\sigma}^{\T}}  \hat{ev}^*_1 (\tilde{\gamma}) }{e(N^{\mathrm{vir}}_{Q_{loop, \sigma}^{\T}/Q^{0+, 0+}_{1,0|1}(Y, \beta)} ) } . \end{align*}

The above loop term can be identified with \eqref{loop} 
by an argument completely parallel to the corresponding procedure in the proof of Theorem 2.1 of  \cite{Elliptic}.

The analysis of vertex terms needs a nontrivial modification to the corresponding procedure of \cite{Elliptic}
due to the appearance of diagonal classes $\Delta _J$ of $\overline{M}_{1, 0|d}$, where $J\subset [d]:=\{ 1, 2, ..., d\}$.
 Here $\Delta_J$ is the codimension $|J|-1$ cycle class represented by the locus where 
$0+$ weighted markings of $J$ coincide to each other.

Let $\mathbf{Vert}_{\sigma}$ be the $p_\sigma$-vertex part of $\lan\, \ran ^{0+}_{1, 0}$. 
Then by the divisor axiom for the infinitesimally weighted marking,  
$\mathbf{Vert}^{\tilde{\gamma}}_{\sigma} =   q_{\tilde{\gamma} } \frac{\partial}{\partial q_{\tilde{\gamma} }}   \mathbf{Vert}_{\sigma}  $.
Therefore it is enough to show 
\begin{align}\label{purevertex}    \mathbf{Vert}_{\sigma} =   -\frac{\log \mathds{R}_{\sigma,0}|_{\tilde{t}=0}}{24}   +  
c_{\sigma}(\lambda) \frac{\mathds{U}_{\sigma}|_{\tilde{t}=0}}{24}.  \end{align}

For $\rho\in \Sigma (1)$, let $\xi _{\rho}$ be the character of the $\G$ action on
the corresponding coordinate of $\CC ^{\Sigma (1)}$. Recall that $\xi _{\rho '} , \rho ' \not\subset\sigma$
form a basis of the character group of $\G$. Hence we may
let $\xi_\rho = \sum_{\rho' \not\subset\sigma}  a_{\rho , \rho '} \xi _{\rho '}$ for some unique integers $a_{\rho , \rho '}$.
For a curve class $\beta \in \Hom _{\ZZ}(\Pic ^{\G} V, \ZZ)$, denote by
$\beta (\rho)$ the integer value of
$\beta$ at the line bundle associated to $\xi _\rho$.

Let $\beta_{\sigma}$ be the set of all pairs $(\rho , j)$, $\rho \not\subset\sigma$, $j\in [\beta (\rho)]$.
Then the $\T$-fixed $p_{\sigma}$-vertex part of $Q_{1, 0}(Y, \beta)$ is the quotient of
$\overline{M}_{1, 0| \beta _{\sigma} }$  by a finite group of order $\prod _{\rho \not \subset \sigma} \beta (\rho )!$,

For $C\in \overline{M}_{1, 0| \beta _{\sigma} }$, we denote the marked point by $x_{(\rho ' , j)}$ attached to the index $(\rho' , j)\in \beta_{\sigma}$.  
Let $\hat{\bx}_{\rho '}$ for $\rho ' \not\subset\sigma$ denote the effective divisor $\sum_{j\in [\beta (\rho ')] }  x_{(\rho ' , j)}$
and let $\hat{\bx}_{\rho}$ for $\rho \in \Sigma (1)$ denote the divisor $\sum_{\rho '}  a_{\rho ,\rho '} \hat{\bx}_{\rho '}$
of $C$. Here for $\rho \not\subset \sigma$ with $\beta(\rho ')=0$ we set 
$\hat{\bx}_{\rho '} = 0$. 
Then the corresponding quasimap in $Q_{1, 0}(Y, \beta)$ is a pair $$(C, \{ \mathcal{O}_C( \hat{\bx}_{\rho} ), u_\rho \} _{\rho \in \Sigma (1)}  ), $$
where $u_\rho$ is the canonical section of $\mathcal{O}_C( \hat{\bx}_{\rho} )$ if $\rho \not\subset\sigma$, otherwise $u_\rho$ is zero.

Let $r$ be the dimension of $E$.
Decompose $E=\oplus _{i=1}^r E_i$ by  $1$-dimensional $\T\ti \G$-representations $E_i$. 
Denote by $\xi _i$  the  character of $\G$ associated to the $\G$ action on $E_i$. Then $\xi _i = \sum_{\rho ' \not\subset\sigma} b_{i, \rho '} \xi _{\rho '}$ for some
unique integers $b_{i, \rho '}$. Let
$\hat{\bx}_i = \sum _{\rho '\not\subset\sigma} b_{i, \rho '} \hat{\bx}_{\rho '}$. 

In below for a divisor $D = \sum_{i} a_i p_i $ of $C$ with $p_i\in C$, $a_i \in \ZZ$, we define $D^+ := \sum_{a_i > 0}  a_i p_i$ 
and $D^- :=\sum _{a_i<0} a_i p_i$.  By  the localization formula (see \S 5.4 of \cite{CKg}), note that 
\begin{multline*}
         \mathbf{Vert}_\sigma  = \sum _{d \ne 0} 
         \frac{q^{\beta}}{\prod_{\rho\not\subset\sigma}\beta(\rho)!}  \int _{\overline{M}_{1, 0|\beta_{\sigma} }} 
         (1+ c_{\sigma}(\lambda)    \re (\mathbb{E}) ) 
         F^{(1,0)}_{\sigma,\beta} , 
 \text{ where }   \\  F^{(1,0)}_{\sigma,\beta} = 
\prod _{\rho \subset \sigma} \frac{\re^{\T} (H^0(C, \mathcal{O}_{\hat{\bx}_{\rho}^-} (\hat{\bx}^+_{\rho}) \ot \CC _{\sigma, \rho})}
{\re^{\T} (H^0(C, \mathcal{O}_{\hat{\bx}_{\rho}^+} (\hat{\bx}^+_{\rho}) \ot \CC_{\sigma, \rho })} 
\prod _{i=1}^r \frac{\re ^{\T} (H^0(C, \mathcal{O}_{\hat{\bx}_{i}^+} (\hat{\bx}^+_{i}) \ot \tilde{E}|_{p_\sigma})}
{\re^{\T}(H^0(C, \mathcal{O}_{\hat{\bx}_{i}^-} (\hat{\bx}^+_{i}) \ot \tilde{E}|_{p_\sigma})}  ,
 \end{multline*}
where $\CC_{\sigma, \rho}$ denotes the 1-dimensional $\T$ subspace of $T_{p_{\sigma}}Y$ corresponding to
the facet of $\sigma$ complement to $\rho$.

 For nonnegative integers $g, m$, the above expression for $F^{(1,0)}_{\sigma, \beta}$ also defines $F^{(g,m)}_{\sigma, \beta}$ as 
 an element in $H^*(\overline{M}_{g, m | \beta _{\sigma}}, \QQ (\lambda ))$, which can be written 
 by a polynomial of diagonal classes and the psi classes $\hat{\psi}_k$ (associated to the $0+$ weighted $k$-th marking).

Let $d$ be a positive integer. For $J\subset [d]$, let $\hat{\psi }_{J}$ denote $\hat{\psi }_j |_{\Delta _J}$, for any $j \in J$.
Note that for $J_1\cap J_2 \ne \emptyset$, \[ \Delta _{J_1} \Delta _{J_2} = (- \hat{\psi}_{J_1\cup J_2} ) ^{|J_1\cap J_2 | -1} \Delta _{J_1\cup J_2} \]
in $H^*(\overline{M}_{g, m| d}, \QQ)$ (see \S 4.4 of \cite{MOP}).
For $j\in [d]$, define $\Delta _{j}$ to be the fundamental class.
For a partition $J=\{ J_1, ..., J_k\}$ of $[d]$ (i.e., $\emptyset\ne J_i \subset [d]$ and $\coprod_{i=1}^k J_i = [d]$), define \[
\Delta _{J} : = \Delta _{J_1} ... \Delta _{J_k} . \]
Then $F^{(g, m)}_{\sigma, \beta}$ can be written
$$F^{(g, m)}_{\sigma, \beta} = \sum _{\text{partion } J=\{ J_1, ..., J_k\} \text{ of } \beta_{\sigma} } a_J \Delta_J $$
as a linear sum of $\Delta_J$ over the coefficient ring
$\QQ(\lambda)[\hat{\psi} _{\bullet} | \bullet\in \beta_{\sigma} ]$.

Since $\re (\mathbb{E}) \hat{\psi} _{\bullet} = 0$ in $H^*(\overline{M}_{1, 0| \beta_{\sigma}})$ and
$\hat{\psi}_{\bullet} = 0$ in $H^*(\overline{M}_{0, 2| \beta_{\sigma} })$, we see that
\begin{align*} \int _{\overline{M}_{1, 0| \beta_{\sigma}}} \re (\mathbb{E})  F^{(1,0)}_{\sigma, \beta} 
& = \frac{ \text{ Coeff. of the const. term }  \text{ in } a_{\{ \beta _{\sigma} \}} }{24}  \\
       & = \frac{1}{24} \int _{\overline{M}_{0, 2| \beta_{\sigma}}} F^{(0,2)}_{\sigma, \beta }  . \end{align*}
This explains the last term of \eqref{purevertex}.

The verification of the first term in RHS of \eqref{purevertex} requires a further analysis of $F^{(g, m)}_{\sigma,\beta}$.
First, observe that the $j$-th cotangent line on $\overline{M}_{g, m| d }$ for $j\in [d]$ is  naturally isomorphic to
the $j$-th cotangent line on  $\overline{M}_{g, m| d-1}$ under the pullback of the forgetting map of
the last $0+$ weighted point. Therefore $\hat{\psi }_{\bullet} ^2 = 0$ $H^*(\overline{M}_{g, m| \beta_{\sigma}})$
and for any partition $J=\{ J_1, ..., J_k\}$ of $\beta_\sigma$
\begin{align}\label{int1}
\int _{\overline{M}_{1, 0| \beta _{\sigma} }} \Delta _J \hat{\psi} _{J_1}^{a_1} ... \hat{\psi} _{J_k} ^{a_k}  
= \left\{\begin{array}{cc} 1/24 & \text{ if } k=1, a_1 = 1 \\
                   0 & \text{ otherwise } \end{array} \right. ; \\ \label{int2}
        \int _{\overline{M}_{0, 3|\beta_{\sigma}}} \Delta _J \hat{\psi} _{J_1}^{a_1} ... \hat{\psi} _{J_k} ^{a_k}  
= \left\{\begin{array}{cc} 1 & \text{ if } a_1 = ... = a_k =1 \\
                   0 & \text{ otherwise } \end{array} \right. .
           \end{align}
Let  \[ A^{\beta} _{J_1, ..., J_k} := \text{ Coeff. of } \prod _{i=1}^k \hat{\psi} _{J_i}  \text{ in } a_{\{J_1, ..., J_k \}} . \]
When $k=1$, we denote  $A^{\beta}_{J_1, ..., J_k}$ simply by $A^{\beta}$.
Denote by $\beta_{J_i}$ the set of all pairs $(\rho , j)$ such that $j\in [ |J_i(\rho) | ] $, where $J_i (\rho) := \{ (\rho, j) \in J_i \}$.
Then note that 
\begin{equation}\label{prod}  A^{\beta}_{J_1, ..., J_k} = \prod _{i=1}^k A^{\beta _{J_i}} , \end{equation}
which follows from two properties:

1) $F_{\sigma , \beta}$ is a product of the $\T$-equivariant Euler classes 
of vector bundles with fibers
$H^0(C, \mathcal{O}_{D} (B))$ where $D$ is an effective divisor and
$B$ is a divisor of $C$.  Here supports of $D$ and $B$ are contained in $\beta_{\sigma}$.

2) Let 
$D=D_1+D_2$, where $D_1$, $D_2$ are effective and let  $B=B_1+B_2$.
Then in the K-group element 
\[ \mathcal{O}_{D_1+D_2} (B_1+B_2 ) = \mathcal{O}_{D_1} (B_1) \ot \mathcal{O}_{D_1} (B_2)+ 
\mathcal{O}_{D_2}(B_2) \ot \mathcal{O}_{D_2} (-D_1+B_1) . \]
Suppose that $\beta_{\sigma}$ is a disjoint union of $S_1, S_2$ 
such that supports of $D_i$, $B_i$ are in $S_i$ for each $i=1, 2$.
Then
\begin{multline*}  \re^{\T} (H^0 (C, \mathcal{O}_{D_1+D_2} (B_1+B_2 ) ))|_{S_1, S_2}  \\
= \re ^{\T} (H^0  (C, \mathcal{O}_{D_1} (B_1)  )) \re^{\T} (H^0 (C,  \mathcal{O}_{D_2}(B_2) )) , \end{multline*}
where the restriction to $S_1, S_2$ is defined to be letting $\Delta _J =0$ whenever there is 
$J_i$ in the partition $J$ such that $J_i$ intersects with $S_1$ and $S_2$ simultaneously.

By \eqref{int1}, \eqref{int2}, \eqref{prod} we note that
 \begin{align*}  & \int _{\overline{M}_{0, 3| \beta_{\sigma} }} F^{(0,3)}_{\sigma,\beta }  
  =   \sum _{k=1}^{\infty}
 \sum_{ \substack{\text{partition } J_1, ..., J_k \\ \text{ of } \beta _{\sigma} } } 
  \prod _{i=1}^k  A^{\beta_{J_i}}    \\
 & =   \sum _{k=1}^{\infty} \frac{1}{k!}
 \sum_{ \substack{ \text{ ordered partition }  \\ (J_1, ..., J_k)  \text{ of } \beta _{\sigma} } } 
  \prod _{i=1}^k  A^{\beta_{J_i}}    \\ 
& =  \sum _{k=1}^{\infty} \frac{1}{k!}
  \sum_{ \substack{\text{ ordered } \\ (\beta _{J_1}, ..., \beta_{J_k}) }} 
 \prod _{\rho\not\subset\sigma}  {\beta (\rho )\choose |J_1  (\rho )|, ...,  |J_k  (\rho )| } \prod _{i=1}^k  A^{\beta_{J_i}}   \\
   &=  \prod _{\rho \not\subset\sigma} \beta(\rho )!   \sum _{k=1}^{\infty} 
  \frac{1}{k!}  \sum_{ \substack{\text{ ordered } \\ (\beta _{J_1}, ..., \beta_{J_k}) } } 
  \prod_{i=1}^k   \left(\frac{24}{\prod _{\rho \not\subset \sigma} |J_i (\rho  )| ! } 
   \int _{\overline{M}_{1, 0 | \beta _{J_i} }}  F^{(1,0)}_{\sigma, \beta _{J_i}}  \right).
\end{align*}

Hence 
\[ \sum_{\beta \ne 0}  \frac{q^{\beta}}{\prod_{\rho\not\subset\sigma} \beta(\rho) !} \int _{\overline{M}_{1, 0| \beta_{\sigma} }} F^{(1,0)}_{\sigma, \beta }  
= \frac{\log D_{\sigma} |_{t=0}}{24}. \]
This combined with \eqref{Dr} verifies  the first term in RHS of \eqref{purevertex}.

\begin{Rmk}\label{flag}
By \S 5.9.2 of \cite{CKg},
it is clear that the above proof works also for Calabi-Yau zero loci of homogeneous vector bundles on partial flag varieties $Y$,
local toric varieties, local Grassmannians, and the total spaces of the cotangent bundles of partial flag varieties. 
\end{Rmk}


\section{Explicit Computations}\label{explicit_comp}

In this section we prove Theorem \ref{Main}.
From now on unless stated otherwise, let $\G=\CC ^*$, $\T=(\CC ^*)^n$, and 
let $\CC_{l_a}$ be the $1$-dimensional representation space of $\G$ with positive weight $l_a$.
Let $E=\bigoplus _{a=1}^r \CC _{l_a}$, with $\sum_{a=1}^r l_a = n$. We take the standard $\T$-action on $V$ and the $\T$-trivial action on $E$. 
This gives rise to a $\T$-equivariant vector bundle $\tilde{E}$ on $[V/\G]$. Choose a character $\theta$ such that
$Y:=V/\!\!/_{\theta}\G$ becomes $\PP ^{n-1}$. 
Under the natural isomorphism $\mathrm{Hom}_{\ZZ} (\mathrm{Pic}^\G V, \ZZ) \cong \ZZ$, we use 
a nonnegative integer $d$ instead of $\beta$. Let $p_i$ be the $i$-th $\T$ fixed point of $Y$ as in \eqref{p_i}.

\subsection{Birkhoff factorization revisited}

By \cite{Gequiv} (see also (5.3.1) of \cite{BigI}), 
\[ \mathds{I}_{\tilde{t}=0} =  I_{\T} |_{t=0}, \]
where LHS  and RHS are defined in \S \ref{inf_I} and \eqref{equi_I}, respectively.

We define the degrees of $\lambda, H, q$ as $$\deg \lambda _j = 1 = \deg H, \  \deg q = 0.$$
Then it is easy to check that, for $k=0, 1, ..., n-1$, 
the $1/z^k$-coefficient  $I_k$ of $$ I_{\T} |_{t=0}$$  is a homogeneous degree $k$ element in 
$\QQ[\sigma _1,..., \sigma _{n-1} , H] [[q]] $ satisfying 
\begin{equation}\label{I_k} I_k \in   \QQ[[q]] H^k   \text{ modulo } \eqref{roots}. \end{equation}
On the other hand \begin{equation*}\label{SH} \mathds{S}_{\tilde{t}=0}(H^k)= H^k + O(1/z) \text{ for } 0\le k \le n-1. \end{equation*}
{\em Throughout \S\ref{explicit_comp} we impose the condition \eqref{roots}.} 
After \cite{Zg1} we define an operation as follows.
\[ \text{ For } F \in \left(\frac{\QQ[H]}{(H^n - \lambda _0 ^n )} \right) [[1/z]] [[q]]  \text{ with } ((zq\frac{d}{dq} + H) F )|_{H=1, z=\infty, q=0} \ne 0 \]
let
\[ \mathfrak{B}(F) := \frac{(z q\frac{d}{dq} + H) F (z, H, q)}{ ((zq\frac{d}{dq} + H) F (z, H, q) )|_{H=1, z=\infty} }.\]
Consider $\mathfrak{B}^k\left(\frac{\underline{I}_\T|_{t=0}}{I_0}\right) $
and note that it is of form $H^k + O (1/z)$ and homogenous of degree $k$ if we put $\deg z = 1$.

\begin{Cor}
\begin{align}\label{ExplicitS} \mathds{S}_{\tilde{t}=0}(H^k) \equiv \mathfrak{B}^k\left(\frac{\underline{I}_\T|_{t=0}}{I_0}\right) ,  \ k=0, 1, ..., n-1 .\end{align}
Here we recall that $\equiv$ denotes the equality modulo relations \eqref{roots}.
\end{Cor}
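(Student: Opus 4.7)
The plan is to deduce \eqref{ExplicitS} from Proposition \ref{BP}, applied with the basis $\phi_i = H^i$ of $H^*_\T(\PP^{n-1}) \otimes \QQ(\lambda)$ (which is genuinely a basis modulo \eqref{roots}). That proposition characterizes $\mathds{S}_{\tilde{t}=0}(H^k)$ as the unique $\QQ(\lambda)[z][[q]]$-linear combination of $z\partial_{\phi_i}\mathds{I}|_{\tilde{t}=0}$ that starts as $H^k + O(1/z)$. The task therefore reduces to verifying that the right hand side $\mathfrak{B}^k(\underline{I}_\T|_{t=0}/I_0)$ satisfies these two requirements.

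First I would record the divisor-type identity
\[ z\,\partial_H \mathds{I}\big|_{\tilde{t}=0} \equiv \bigl( H + zq\tfrac{d}{dq} \bigr) \underline{I}_\T|_{t=0}, \]
which follows by direct computation from \eqref{equi_I}: writing $I_\T(t_H H, q) = e^{t_H H/z} \sum_d (qe^{t_H})^d (\cdots)$, the operator $z\partial_{t_H}$ brings down $H$ from the prefactor $e^{t_H H/z}$ and the Euler operator $zq\frac{d}{dq}$ from the $e^{t_H d} q^d$ dependence. Consequently, the $j$-fold iterate $(H + zq\frac{d}{dq})^j \underline{I}_\T|_{t=0}$ coincides with $(z\partial_H)^j \mathds{I}|_{\tilde{t}=0}$, which by the chain rule is a $\QQ(\lambda)[z]$-polynomial in $z\partial_{\phi_0}\mathds{I}|_{\tilde{t}=0}, \ldots, z\partial_{\phi_j}\mathds{I}|_{\tilde{t}=0}$.

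Second, I would argue by induction on $k$ that $\mathfrak{B}^k(\underline{I}_\T|_{t=0}/I_0)$ is a $\QQ(\lambda)[z][[q]]$-linear combination of the iterates $(H + zq\frac{d}{dq})^j \underline{I}_\T|_{t=0}$ for $j \le k$, and has the form $H^k + O(1/z)$. The normalizing denominators $\bigl((zq\frac{d}{dq} + H)F\bigr)|_{H=1,\, z=\infty}$ appearing in the definition of $\mathfrak{B}$ lie in $\QQ[[q]]$ and are of the form $1 + O(q)$, so they are invertible; hence they keep the coefficients inside $\QQ(\lambda)[z][[q]]$ and, by construction, produce exactly the required leading term $H^k$. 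The homogeneity assertion \eqref{I_k} guarantees that at each stage of the recursion the leading $z$-coefficient is of the required form $C_{k-1}(q) H^{k-1}$, so the induction goes through.

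Combining the two steps, $\mathfrak{B}^k(\underline{I}_\T|_{t=0}/I_0)$ meets both hypotheses of Proposition \ref{BP}, and by uniqueness it equals $\mathds{S}_{\tilde{t}=0}(H^k)$ modulo \eqref{roots}. The main obstacle I anticipate is the bookkeeping in the inductive step, where one must verify that the normalizing denominators stay inside $\QQ[[q]]^\times$ rather than leaking into $\QQ(\lambda)(z)[[q]]$; this is precisely where \eqref{I_k} and the constraint $C_k(q) = 1 + O(q)$ are essential.
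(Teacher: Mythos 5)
Your overall strategy is the paper's: reduce to Proposition \ref{BP} with the basis $\phi_i=H^i$, check that $\mathfrak{B}^k(\underline{I}_\T|_{t=0}/I_0)$ is of the form $H^k+O(1/z)$, and verify that it is a $\QQ(\lambda)[z][[q]]$-combination of the inputs $z\partial_{\phi_i}\mathds{I}|_{\tilde t=0}$. The second and third steps of your argument (the invertibility of the normalizing constants, which are $1+O(q)\in\QQ[[q]]^\times$, and the role of \eqref{I_k} in keeping the leading coefficient a multiple of $H^k$) are fine and match what the paper leaves implicit.

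There is, however, a genuine gap in your first step. Your direct computation from \eqref{equi_I} only establishes the first-order identity $z\partial_H\mathds{I}|_{\tilde t=0}=(H+zq\frac{d}{dq})\underline{I}_\T|_{t=0}$, i.e.\ the derivative along the divisor line $\tilde t=t_H\tilde H$. You then assert that the $j$-fold iterate $(z\partial_H)^j\mathds{I}|_{\tilde t=0}$ is ``by the chain rule'' a polynomial in the \emph{first} derivatives $z\partial_{\phi_0}\mathds{I}|_{\tilde t=0},\dots,z\partial_{\phi_j}\mathds{I}|_{\tilde t=0}$. The chain rule gives no such thing: a $j$-th order directional derivative of a function of $(t_0,\dots,t_{n-1})$ is not in general expressible through its first derivatives at the origin, and Proposition \ref{BP} only accepts combinations of the first derivatives $z\partial_{\phi_i}\mathds{I}$. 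What actually makes the step work is the special structure of the big $I$-function, namely the derivative-form formula of \S 5.3 of \cite{BigI},
\begin{equation*}
\mathds{I}(\tilde t)=\Bigl(\exp\bigl(\textstyle\sum_{i}\frac{t_i}{z}(zq\frac{d}{dq}+H)^i\bigr)\Bigr)I_\T|_{t=0},
\end{equation*}
which yields directly $(z\partial_{H^i}\mathds{I})|_{\tilde t=0}=(zq\frac{d}{dq}+H)^i(\mathds{I}|_{\tilde t=0})$ for every $i$; this is exactly the identity \eqref{One_I_Many_Div} the paper invokes, and it shows each iterate is literally one of the admissible inputs of Proposition \ref{BP} (no polynomial manipulation needed). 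Without citing this formula (or reproving it), your reduction to Proposition \ref{BP} does not go through for $k\ge 2$.
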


\begin{proof}
Let $\tilde{H} \in H^*_{\T} ([\CC ^n / \CC ^*]) $ be the natural lift of $H$ and 
let $$\tilde{t} = \sum _{i=0}^{n-1} t_i \tilde{H}^i$$ with formal variables $t_i$. Then, there is the
($\T$-equivariant version of) derivative form formula of the big I-function as in  \S 5.3 of \cite{BigI}:
\begin{equation*}\mathds{I} (\tilde{t} ) =  \left(\exp (\sum _{i=0}^{n-1} \frac{t_i}{z} (z q \frac{d}{dq} + H)^i ) \right) I_{\T} |_{t=0},
\end{equation*}
 which shows that
 \begin{equation}\label{One_I_Many_Div} (z\partial _{H^i} \mathds{I} )  |_{\tilde{t}=0}  = (zq\frac{d}{dq} + H)^i (\mathds{I}|_{\tilde{t} =0}). \end{equation}
By \eqref{One_I_Many_Div}  and  Proposition \ref{BP}, in order to verify \eqref{ExplicitS}, it is enough to recall that both sides of \eqref{ExplicitS}
are of form $H^k + O(1/z)$.
 \end{proof}

Now consider an equivariant cohomology basis $$\{ 1, H:=c_1^{\T}(\mathcal{O}(1)), ..., H^{n-1}\}$$ of the $\T$-equivariant cohomology ring
\begin{align*} H^*_\T(\PP ^{n-1}) \cong \QQ [\lambda _1, ..., \lambda _n, h]/ (\prod _{i=1}^n  (h - \lambda _i) ) , \ \ \ 
                           H \mapsto  h . \end{align*}
Its $E$-twisted Poincar\'e metric modulo relations \eqref{roots} becomes, 
 \begin{align*}  g_{ij} := \left\{ \begin{array}{rl} \prod _a l_a & \text{if } i+j = n-1-r \\
                                                            \lambda_0^n \prod _a l_a & \text{if } i+j = 2n-1-r  \end{array} \right.
  \end{align*} for $0\le i, j \le n-1$.  Here we use the relation $H^n = - \prod_{j=1}^n (-\lambda _j) = \lambda _0^n$.

There is an  expression of $V$-correlators in terms of $S$-correlators by Theorem 3.2.1 of \cite{CKg}:
\begin{align*}  e_i \mathds{V}_{ii} (x, y)|_{\tilde{t}=0}  = \frac{1}{e_i}
\frac{\sum _j \mathds{S}_{z=x, \tilde{t}=0} (\phi_j)|_{p_i} \mathds{S}_{z=y, \tilde{t}=0} ( \phi ^j )|_{p_i}}{x+ y}  . \end{align*}
Hence 
\begin{multline}\label{EquivDelta}  e_i\mathds{V}_{ii} |_{\tilde{t}=0} \equiv  \frac{1}{(\prod l_a)e_i(x+y)}   \left(
 \sum _{k=0}^{n-1-r} \mathds{S}_{z=x, \tilde{t}=0}(H^k)|_{p_i}\mathds{S}_{z=y, \tilde{t}=0}(H^{n-1-r-k})|_{p_i} 
 \right. 
  \\  \left.
 + \frac{1}{\lambda_0^n} \sum _{b=0}^{r-1}
\mathds{S}_{z=x, \tilde{t}=0}(H^{n-r+b})|_{p_i}\mathds{S}_{z=y, \tilde{t}=0}(H^{n-1-b})|_{p_i} \right)      . \end{multline}

\subsection{Vertex terms}

Applying \eqref{asymJ} to 
$$\mathds{I} |_{\tilde{t}=0, p_i} = I_{\T} |_{t=0, p_i} \equiv
\sum _{d=0}^{\infty}  q^d \frac{\prod _{a=1}^r \prod _{k=1}^{l_ad}  (l_a + k\frac{z}{\lambda _i})}{\prod _{k=1}^d ((1+k\frac{z}{\lambda _i})^n - 1)}
 \equiv I_{\T}|_{t=0, p_n, z\mapsto z/\lambda_i} , $$ we obtain 
\begin{align*}  
 \mathds{I} |_{\tilde{t}=0, p_i} 
\equiv   e^{\mu (q)\lambda _i /z}  (\sum _{k=0}^{\infty} R_{k}(q) (z /\lambda _i)^k )  \end{align*}
for some  $\mu (q) \in q\QQ [[q]] $ and  $R_{k}(q) \in \QQ [[q]]$. Hence
\begin{align*}   \mathds{I} |_{\tilde{t}=t_H\tilde{H}, p_i} & \equiv e^{\kl_it_H /z}  (I_{\T} |_{t=0, q\mapsto qe^{t_H}} ) \\
& \equiv  e^{\kl_it_H /z}  e^{\mu (qe^{t_H})\lambda _i /z}  (\sum _{k=0}^{\infty} R_{k}(qe^{t_H}) (z /\lambda _i)^k ) .   \end{align*}
Thus
\begin{align} \label{Umu} \mathds{U}_i |_{\tilde{t}=t_H  \tilde{H} } & \equiv  \lambda _i (t_H + \mu (qe^{t_H})) \text{ and }  \\
      \mathds{R}_{i, k} & \equiv  R_{k}(qe^{t_H})/ (\lambda _i)^k . \nonumber \end{align}

Since 
\begin{align*} r_{i, 0}|_{t=0} & = \mathds{R}_{i, 0} |_{\tilde{t}=0} , \\ 
u_i|_{t=0} &  = \mathds{U}_i|_{\tilde{t}=0} , \text { and } \\
c_i(\lambda) & =  (\sum_{j\ne i} \frac{1}{\lambda _j - \lambda _i} )+ \sum _a \frac{1}{l_a\lambda _i}, \end{align*}
we conclude that
\begin{align}\label{final_vertex1}  \sum _i \frac{\log D_i|_{t=0}}{24} & \equiv \frac{ -n \log  R_0(q)}{24} , \\ 
\label{final_vertex2}
                \sum _i  \frac{c_i(\lambda) u_i |_{t=0}}{24} &\equiv \frac{1}{24} (\sum_a  \frac{n}{l_a} -\binom{n}{2}) \mu (q) . \end{align}


\subsection{Loop terms} 
If we let \begin{align*} 
\mathrm{W}_{p, p'}  & : =  (\left(\mathfrak{B}\right)^{p} \frac{\uuI _{\T} |_{t=0} }{I_0})|_{H= 1, z=x} 
(\left(\mathfrak{B}\right)^{p'}  \frac{\uuI_{\T}|_{t=0}}{I_0})|_{H= 1, z=y} \ \ ; \\
\mathrm{V}(x, y, q) & :=
  \sum _{p+p'=n-1-r}  \mathrm{W}_{p, p'}  + \sum _{p+p' = 2n -1 -r, \ n-r \le p\le n-1} \mathrm{W}_{p, p'}  , \end{align*}
then by \eqref{ExplicitS} and \eqref{EquivDelta},
\[ e_i\mathds{V}_{ii}(x,y) |_{\tilde{t}=0}\equiv 
\frac{\lambda _i ^{n-1-r}}{( \prod l_a )e_i  (x+y)}     \mathrm{V}(x/\lambda _i , y/\lambda _i,  q) . \]
Here we use also the degree property that $\deg (\mathds{S} (\phi _j) \mathds{S} (\phi ^j )) = n-1-r$.

Therefore
\begin{multline*} e^{-\mathds{U}_i |_{\tilde{t}=0}(1/x + 1/y)} e_i \mathds{V}_{ii}(x, y) |_{\tilde{t}=0}  \\ \equiv 
\frac{\lambda _i^{n-1}}{\ret (T_{p_i}Y) (x+y)}  e^{- \mu (q)(\lambda _i/x + \lambda _i/y)} \rV (x/\lambda _i, y/\lambda _i , q)  .
 \end{multline*}

Now the limit of $x, y \ra 0$ of \eqref{loop} (or equivalently the residue at $x=0$, $y=0$ of $\frac{\eqref{loop}}{xy}$) as computed  
in \cite[Lemma 5.4]{Popa1} becomes 
\begin{align*} \lim_{(x, y)\ra (0, 0)} (e^{-\mathds{U}_i(1/x + 1/y)}  e_i \mathds{V}_{ii}(x, y)  - \frac{1}{x+y} )|_{\tilde{t}=0}
\equiv  \frac{\lambda _i ^{n-2} }{\ret (T_{p_i} Y ) L(q)} q\frac{d}{dq} \mathrm{Loop} (q) ,\end{align*} 
where
\begin{align*} 
& L (q)  := (1-q\prod_a l_a^{l_a} )^{-1/n} \text { and } \\
& \mathrm{Loop} (q) := 
 \frac{n}{24} ( n-1 - 2 \sum_{a=1}^r \frac{1}{l_a}) \mu (q)    
   \\ & - \frac{3(n-1-r)^2 + (n-2)}{24}\log (1-q\prod l_a ^{l_a})  
       - \sum _{k=0}^{n-2-r}\binom{n-r-k}{2} \log C_k (q)  . \end{align*}

Since $$ \frac{\partial \mathds{U}_i}{\partial t_\gamma} |_{\tilde{t}=0}  = L(q) \lambda _i$$  by \eqref{Umu} and Proposition \ref{ExplicitProp} below,
we conclude that
\begin{align} \label{final_loop}     \sum _i  \mathbf{Loop}_i = \frac{1}{2} q\frac{d}{dq} \mathrm{Loop}(q).
\end{align}

\subsection{Proof of Theorem \ref{Main}}
Now the sum \[ \eqref{final_vertex1} + \eqref{final_vertex2} +  \frac{1}{2} \mathrm{Loop}(q) \]  can be explicitly obtained 
 by Proposition \ref{ExplicitProp} and hence we complete the proof of Theorem \ref{Main}.

\subsection{Explicit computations of $\mu, R_0, R_1$ and loop terms}
Recall we assume \eqref{roots}. Let $\lambda _0= 1$.
Note that 
$I_{\T}$ satisfies the differential equation (see \cite[Corollary 11.7]{Gequiv})
\[ \mathrm{PF} I_{\T} |_{t=t_H \cdot H} =0, \ \ \mathrm{PF}:= ( z\frac{d}{dt} )^n - 1 - 
q\prod _{a} \prod _{m=1}^{l_a} (l_az\frac{d}{dt} + mz) . \]
Applying the differential operator PF to the asymptotic form of 
\[ I_{\T} |_{t=t_H H, p_n}  \]  one 
obtain $\mu, R_0, R_1$ and the loop limit (see \S 4.2, \S 4.3 of \cite{Popa1} for details).

For the reader's convenience, we state the following Proposition due to Popa \cite{Popa1}. 

\begin{Prop}\label{ExplicitProp} \cite[Proposition 4.3 \& 4.4]{Popa1}

Consider $C_b$, $b=0, 1, ..., n-1$. 

\begin{enumerate}

\item\label{product} $\prod _{i=0}^{n-r} C_i = (1- q\prod_a l_a^{l_a} )^{-1}$.

\item $C_b = C_{n-r-b}$ for $b=0, 1, ..., n-r$.

\item\label{one} $C_b = 1$ for $b=n-r+1, ..., n-1$.

\item\label{muR}  \begin{align}\label{emu} \mu (q) = \int_0^q \frac{ (1-x\prod_a l_a^{l_a} )))^{-\frac{1}{n}}- 1}{x} dx ,  \end{align}
 \begin{align}\label{r0} R_0= L^{\frac{r+1}{2}} . \end{align}

\end{enumerate}
\end{Prop}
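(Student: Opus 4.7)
The strategy, following Popa \cite{Popa1}, is to exploit the Picard-Fuchs equation $\mathrm{PF}\, I_\T|_{t=0} = 0$ from \cite[Corollary 11.7]{Gequiv}, where
$$\mathrm{PF} = D^n - 1 - q\prod_{a=1}^r \prod_{m=1}^{l_a}(l_a D + mz), \qquad D := zq\tfrac{d}{dq} + H.$$
All four parts are extracted by substituting appropriate representations of $I_\T|_{p_n}$ into this equation and expanding in powers of $z$: an asymptotic expansion at $z = 0$ for part (4), and the Birkhoff factorization of $\underline{I}_\T(0, q)$ for parts (1)--(3).

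For part (4): under \eqref{roots} with $\lambda_0 = 1$, the class $H$ acts at $p_n$ as $1$. Substituting the ansatz $I_\T(0, q)|_{p_n} \equiv e^{\mu(q)/z}\sum_{k \geq 0} R_k(q) z^k$ together with the identity $D(e^{\mu/z} f) = e^{\mu/z}(L f + zq f')$, where $L := q\mu' + 1$, reduces $\mathrm{PF}\,I=0$ to an ODE hierarchy on $(\mu, R_0, R_1, \ldots)$. The $z^0$ coefficient yields
$$L^n R_0 - R_0 - q\Bigl(\prod_a l_a^{l_a}\Bigr) L^n R_0 = 0,$$
so $L^n(1 - q\prod l_a^{l_a}) = 1$; integrating $q\mu' = L - 1$ with $\mu(0) = 0$ gives \eqref{emu}. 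The $z^1$ coefficient then produces a first-order linear ODE for $R_0$ whose unique solution with $R_0(0) = 1$ is $L^{(r+1)/2}$, giving \eqref{r0}; the exponent $(r+1)/2$ arises from careful collection of the derivative-corrections in $D^n$ and in $\prod_a\prod_m(l_a D + mz)$.

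For parts (1)--(3): equation \eqref{I_k} gives $\underline{I}_\T(0, q) = \sum_{k \geq 0} I_k(q) H^k/z^k$ for $k \leq n-1$, with higher $H$-powers reducing via $H^n = \lambda_0^n$. Iterating the Birkhoff recursion $B_k = D(B_{k-1})/C_{k-1}$ and tracking the $z^0$-coefficient at each step, one finds that $D^{n-r+1} \underline{I}_\T(0,q)$ equals $\bigl(\prod_{i=0}^{n-r} C_i\bigr) B_{n-r+1}$ modulo $z$-corrections coming from the commutator $[D, C_k] = zq C_k'$, while the PF equation expresses the same quantity modulo lower $D$-order as $(1-q\prod l_a^{l_a})^{-1} \underline{I}_\T(0, q)$ at the top $H$-degree; equating the leading $z^0$ coefficients then yields part (1). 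Part (3) follows from a dimension count: for $H$-degree exceeding $n-r-1$, the Birkhoff recursion wraps around via $H^n = \lambda_0^n$, and the numerator $\prod_a\prod_{k=1}^{l_a d}(l_a H + kz)$ of $\underline{I}_\T(0, q)$ contributes no genuine $q$-correction at those degrees, forcing $C_b = 1$ for $b \geq n-r+1$. Part (2), $C_b = C_{n-r-b}$, reflects the Poincar\'e-duality symmetry on the Calabi-Yau $X = \WmodG$, which induces an involution on the Birkhoff-factored sequence $(C_0, \ldots, C_{n-r})$ and can be matched against the structure of the PF operator.

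The principal difficulty lies in parts (1)--(3): the non-commutativity $[D, C_k(q)] = zq C_k'(q)$ forces a careful accounting of $z$-corrections across iterations of the Birkhoff recursion, and the duality symmetry underlying (2) must be verified explicitly against PF. In contrast, part (4) reduces to a direct (if intricate) algebraic extraction from the $z^0$ and $z^1$ coefficients of $\mathrm{PF}\, I|_{p_n} = 0$, carried out in detail in \cite[\S 4.2--4.3]{Popa1}.
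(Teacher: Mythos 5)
Your proposal is correct and follows essentially the same route as the paper, which itself only sketches the argument and defers to Popa \cite[\S 4.2--4.3]{Popa1}: apply the Picard--Fuchs operator $\mathrm{PF}$ to the asymptotic form $e^{\mu/z}\sum_k R_k z^k$ of $I_\T|_{t=0,p_n}$ to extract $\mu$ and $R_0$ from the $z^0$ and $z^1$ coefficients, and compare the iterated Birkhoff recursion with $\mathrm{PF}$ for the statements about the $C_b$. Your identification of $L=q\mu'+1$ and the leading-order relation $L^n(1-q\prod_a l_a^{l_a})=1$ match the intended computation, and your sketch of parts (1)--(3) is at the same level of detail as the paper's own treatment (the paper additionally offers, in Remark \ref{C_b}, a quasimap-theoretic reason for the symmetry $C_b=C_{n-r-b}$ away from $b=0$).
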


\begin{Rmk}\label{C_b} Let $\{(H^b)^\vee\}_b$ be the $E$-twisted Poincar\'e dual basis of $\{ H^b \} _b$. 
Note that \begin{align*} C_b & \equiv \lan (H^b)^\vee, H^{b-1} ; H \ran ^{0+, 0+}_{0, 2|1} , \text{  for } b=1, ..., n-1 \end{align*}
and hence item (2) of Proposition \ref{ExplicitProp} naturally follows except the claim $C_0 = C_{n-r}$.
\end{Rmk}

\end{document}